\newtheorem{theorem}{Theorem}[section]
\newtheorem{corollary}[theorem]{Corollary}
\newtheorem{lemma}[theorem]{Lemma}
\newtheorem{proposition}[theorem]{Proposition}
\newtheorem{definition}[theorem]{Definition}
\newtheorem{exx}[theorem]{Example}
\newtheorem{remm}[theorem]{Remark}
\def\BibTeX{{\rm B\kern-.05em{\sc i\kern-.025em b}\kern-.08em
    T\kern-.1667em\lower.7ex\hbox{E}\kern-.125emX}}
\newcommand{\cH}{{\mathcal H}}
\newcommand{\cX}{{\mathcal X}}
\newcommand{\cB}{{\cal B}}
\newcommand{\cS}{{\cal S}}
\newcommand{\Sol}{{\mathcal S_{\cH}}}
    \newcommand{\SolGamma}{{\mathcal S_{\cH|\Gamma}}}
    \newcommand{\Ne}{\mathbb{N}}
    \renewcommand{\Re}{\mathbb{R}}
    \newcommand{\Se}{\mathbb{S}}
    \newcommand{\salt}{\vspace*{2ex}}
    \newenvironment{remark}{\begin{remm}\rm }{\hfill \hspace*{1pt} \hfill
    %  $\lrcorner$\end{remm}}
    $\triangle$\end{remm}}
    \newenvironment{example}{\begin{exx}\rm }{\hfill \hspace*{1pt} \hfill
    %  $\lrcorner$ \end{exx}}
    $\triangle$
      \end{exx}}
    \newenvironment{problem*}[1]{\par\salt \noindent {\bf #1.}}{\hfill \hspace*{1pt} \hfill $\triangle$ \par \salt}
\newcommand\real{\ensuremath{{\mathbb R}}}
\newcommand{\ball}{\mathbb{B}}
\newcommand{\tto}{\ensuremath{\rightrightarrows}}
\newcommand{\dom}{\mathsf{dom}\xspace}
\newcommand{\rge}{\mathsf{rge}\xspace}
\DeclareMathOperator{\interior}{int}
\newcommand{\eps}{\varepsilon}
\begin{document}

\title{Reduction Theorems for Hybrid Dynamical Systems}
\author{Manfredi Maggiore, Mario Sassano, Luca Zaccarian % <-this % stops a space
\thanks{This research was carried out while M. Maggiore
  was on sabbatical leave at the Laboratoire des Signaux et
  Syst\`emes, CentraleSup\'elec, Gif sur Yvette, France. 
M. Maggiore was supported by the Natural Sciences
  and Engineering Research Council of Canada (NSERC).
L. Zaccarian is supported in
  part by grant PowerLyap funded by CaRiTRo.}% <-this % stops a space
\thanks{M. Maggiore is with the Department
  of Electrical and Computer Engineering, University of Toronto,
  Toronto, ON, M5S3G4 Canada \texttt{maggiore@ece.utoronto.ca}}% 
\thanks{M. Sassano is with the Dipartimento di
Ingegneria Civile e Ingegneria Informatica, ``Tor Vergata'', Via del
Politecnico 1, 00133, Rome, Italy \texttt{mario.sassano@uniroma2.it}}% 
\thanks{L. Zaccarian is with 
LAAS-CNRS, Universit\'e de Toulouse, CNRS, Toulouse, France and with Dipartimento di
    Ingegneria Industriale, University of Trento, Italy \texttt{zaccarian@laas.fr}}%
}

\maketitle

\begin{abstract}
This paper presents reduction theorems for stability, attractivity,
and asymptotic stability of compact subsets of the state space of a
hybrid dynamical system.  Given two closed sets $\Gamma_1
\subset \Gamma_2 \subset \Re^n$, with $\Gamma_1$ compact, the theorems
presented in this paper give conditions under which a qualitative
property of $\Gamma_1$ that holds relative to $\Gamma_2$ (stability,
attractivity, or asymptotic stability) can be guaranteed to also hold
relative to the state space of the hybrid system. As a consequence of
these results, sufficient conditions are presented for the stability
of compact sets in cascade-connected hybrid systems. We also present a
result for hybrid systems with outputs that converge to zero along
solutions. If such a system enjoys a detectability property with
respect to a set $\Gamma_1$, then $\Gamma_1$ is globally
attractive. The theory of this paper is used to develop a hybrid
estimator for the period of oscillation of a sinusoidal signal.
\end{abstract}

\section{Introduction}

\IEEEPARstart{O}{ver} the past ten to fifteen years, research in hybrid dynamical systems theory
has intensified following the work of A.R. Teel and co-authors
(e.g.,~\cite{GoeSanTee09,GoeSanTee12}), which unified previous results
under a common framework, and produced a comprehensive theory of
stability and robustness. In the framework
of~\cite{GoeSanTee09,GoeSanTee12}, a hybrid system is a dynamical
system whose solutions can flow and jump, whereby flows are modelled by
differential inclusions, and jumps are modelled by update
maps. Motivated by the fact that many challenging control
specifications can be cast as problems of set stabilization, the
stability of sets plays a central role in hybrid systems theory.

For continuous nonlinear systems, a useful way to assess whether a
closed subset of the state space is asymptotically stable is to
exploit hierarchical decompositions of the stability problem. To
illustrate this fact, consider the continuous-time cascade-connected
system
\begin{equation}\label{eq:sys}
\begin{aligned}
&\dot x^1 = f_1(x^1,x^2) \\ 
&\dot x^2 = f_2(x^2),
\end{aligned}
\end{equation}
with state $(x^1,x^2) \in \Re^{n_1} \times \Re^{n_2}$, and assume that
$f_1(0,0)=0$, $f_2(0)=0$. To determine whether or not the equilibrium
$(x^1,x^2) = (0,0)$ is asymptotically stable for~\eqref{eq:sys}, one
may equivalently determine whether or not $x^1=0$ is asymptotically
stable for $\dot x^1 = f_1(x^1,0)$ and $x^2=0$ is asymptotically
stable for $\dot x^2 = f_2(x^2)$ (see,
e.g.,~\cite{Vid80,SeiSua90}). This way the stability problem is
decomposed into two simpler sub-problems.

For dynamical systems that do not possess the cascade-connected
structure~\eqref{eq:sys}, the generalization of the
decomposition just described is the focus of the so-called {\em
  reduction problem}, originally formulated by P. Seibert
in~\cite{Sei69,Sei70}.  Consider a dynamical system on $\Re^n$ and two
closed sets $\Gamma_1 \subset \Gamma_2 \subset \Re^n$.  Assume that
$\Gamma_1$ is either stable, attractive, or asymptotically stable
relative to $\Gamma_2$, i.e., when solutions are initialized on
$\Gamma_2$. What additional properties should hold in order that
$\Gamma_1$ be, respectively, stable, attractive, or asymptotically
stable? The global version of this reduction problem is formulated
analogously. 
For continuous dynamical
systems, 
the reduction problem was solved in~\cite{SeiFlo95} for the case when
$\Gamma_1$ is compact, and in~\cite{ElHMag13}  when
$\Gamma_1$ is a closed set. In particular, the work in~\cite{ElHMag13}
linked the reduction problem with a hierarchical control design
viewpoint, in which a hierarchy of control specifications corresponds
to a sequence of sets $\Gamma_1 \subset \cdots \subset \Gamma_l$ to be
stabilized. The design technique of backstepping can be regarded as
one such hierarchical control design problem. Other relevant
literature for the reduction problem is found
in~\cite{Kal99,IggKalOut96}.

In the context of hybrid dynamical systems, the reduction problem is
just as important as its counterpart for continuous nonlinear systems.
To illustrate this fact, we mention three application areas of the theorems
presented in this paper. Additional theoretical implications are
discussed in Section~\ref{sec:reduction_problem}.

Recent literature on stabilization of hybrid limit cycles for bipedal
robots~(e.g.,~\cite{PleGriWesAbb03}) relies on the stabilization of a
set $\Gamma_2$ (the so-called hybrid zero dynamics) on which 
the robot satisfies ``virtual constraints.'' The key idea in this
literature is that, with an appropriate design, one may ensure the
existence of a hybrid limit cycle, $\Gamma_1 \subset \Gamma_2$,
corresponding to stable walking for the dynamics of the robot on the
set $\Gamma_2$. In this context, the theorems presented in this paper
can be used to show that the hybrid limit cycle is asymptotically
stable for the closed-loop hybrid system, without Lyapunov analysis.

Furthermore, as we show in Section~\ref{sec:example}, the problem of
estimating the unknown frequency of a sinusoidal signal can be cast as
a reduction problem involving three sets $\Gamma_1 \subset \Gamma_2
\subset \Gamma_3$. More generally, we envision that the theorems in
this paper may be applied in hybrid estimation problems as already done in \cite{BisoffiAuto17}, whose proof would be simplified by the results of this paper.

Finally, it was shown in~\cite{InvernizziACC18,MichielettoIFAC17,RozMag14} that for underactuated VTOL
vehicles, leveraging reduction theorems one may partition the position
control problem into a hierarchy of two control specifications:
position control for a point-mass, and attitude tracking. Reduction
theorems for hybrid dynamical systems enable employing the
hybrid attitude trackers in~\cite{MayhewTAC11}, allowing one to
generalize the results in~\cite{InvernizziACC18,RozMag14} and obtain global asymptotic
position stabilization and tracking.

{\em Contributions of this paper.} The goal of this paper is to extend
the reduction theorems for continuous dynamical systems found
in~\cite{SeiFlo95,ElHMag13} to the context of hybrid systems modelled
in the framework of~\cite{GoeSanTee09,GoeSanTee12}. We assume
throughout that $\Gamma_1$ is a compact set and develop reduction
theorems for stability of $\Gamma_1$
(Theorem~\ref{thm:reduction_stability}), local/global attractivity of
$\Gamma_1$ (Theorem~\ref{thm:reduction_attractivity}), and
local/global asymptotic stability of $\Gamma_1$
(Theorem~\ref{thm:reduction_asy_stability}). The conditions of the
reduction theorem for asymptotic stability are necessary and
sufficient. Our results generalize the reduction theorems found
  in \cite[Corollary 19]{GoeSanTee09} and \cite[Corollary
    7.24]{GoeSanTee12}, which were used in \cite{Tee10} to develop a
  local hybrid separation principle.

We explore a number of consequences of our reduction theorems. In
Proposition~\ref{prop:stability_cascades} we present a novel result
characterizing the asymptotic stability of compact sets for
cascade-connected hybrid systems. In
Proposition~\ref{prop:detectability} we consider a hybrid system with
an output function, and present conditions guaranteeing that
boundedness of solutions and convergence of the output to zero imply
attractivity of a subset of the zero level set of the output. These
conditions give rise to a notion of detectability for hybrid systems
that had already been investigated in slightly different form
in~\cite{SanGoeTee07}. Finally, in the spirit of the hierarchical
control viewpoint introduced in~\cite{ElHMag13}, we present a
recursive reduction theorem (Theorem~\ref{thm:recursive_reduction}) in
which we consider a chain of closed sets $\Gamma_1 \subset \cdots
\subset \Gamma_l \subset \Re^n$, with $\Gamma_1$ compact, and we
deduce the asymptotic stability of $\Gamma_1$ from the asymptotic
stability of $\Gamma_i$ relative to $\Gamma_{i+1}$ for all $i$.
Finally, the theory developed in this paper is applied to the problem
of estimating the frequency of oscillation of a sinusoidal
signal. Here, the hierarchical viewpoint simplifies an otherwise
difficult problem by decomposing it into three separate sub-problems
involving a chain of sets $\Gamma_1 \subset \Gamma_2 \subset
\Gamma_3$.

{\em Organization of the paper.} In Section~\ref{sec:prelim} we
present the class of hybrid systems considered in this paper and
various notions of stability of sets. The concepts of this section
originate in~\cite{GoeSanTee09,GoeSanTee12,SeiFlo95,ElHMag13}. In
Section~\ref{sec:reduction_problem} we formulate the reduction problem
and its recursive version, and discuss links with the stability of
cascade-connected hybrid systems and the output zeroing problem with
detectability. In Section~\ref{sec:main_results} we present novel
reduction theorems for hybrid systems and their proofs. The results of
Section~\ref{sec:main_results} are employed in
Section~\ref{sec:example} to design an estimator for the frequency of
oscillation of a sinusoidal signal. Finally, in
Section~\ref{sec:conclusion} we make concluding remarks.

{\em Notation.} 
We denote the set of positive real numbers by $\Re_{>0}$, and the set
of nonnegative real numbers by $\Re_{\geq 0}$. We let $\Se^1$ denote
the set of real numbers modulo $2\pi$.  If $x \in \Re^n$, we denote by
$|x|$ the Euclidean norm of $x$, i.e., $|x| = (x^\top x)^{1/2}$. We
denote by $\ball$ the closed unit ball in $\Re^n$, i.e.,
$\ball:=\{x \in \Re^n: |x| \leq 1\}$.  If $\Gamma\subset \Re^n$ and $x
\in \Re^n$, we denote by $|x|_\Gamma$ the point-to-set distance of $x$
to $\Gamma$, i.e., $|x|_\Gamma = \inf_{y \in \Gamma} |x-y|$.  If
$c>0$, we let $B_c(\Gamma) := \{x \in \Re^n : |x|_\Gamma < c\}$, and
$\bar B_c(\Gamma) :=\{x \in \Re^n : |x|_\Gamma \leq c\}$. If $U$ is a
subset of $\Re^n$, we denote by $\bar U$ its closure and by $\interior
U$ its interior. Given two subsets $U$ and $V$ of $\Re^n$, we denote 
their Minkowski sum
by $U + V:= \{ u+v:\; u \in U, v\in V\}$. The empty set is denoted by $\emptyset$.

\section{Preliminary notions}
\label{sec:prelim}

In this paper we use the notion of hybrid system defined
in~\cite{GoeSanTee09,GoeSanTee12} and some notions of set stability
presented in \cite{ElHMag13}. In this section we review the essential
definitions that are required in our development.

Following~\cite{GoeSanTee09,GoeSanTee12}, a hybrid system is a 4-tuple
$\cH = (C,F,D,G)$ satisfying the

\smallskip
\noindent
{\bf Basic Assumptions (\hspace*{-.5ex}\cite{GoeSanTee09,GoeSanTee12})}
\begin{enumerate}[{A}1)]

\item $C$ and $D$ are closed subsets of $\Re^n$.
\item $F: \Re^n \tto \Re^n$ is outer semicontinuous, locally bounded
  on $C$, and such that $F(x)$ is nonempty and convex for each $x \in
  C$.
\item $G :\Re^n \tto \Re^n$ is outer semicontinuous, locally bounded
  on $D$, and such that $G(x)$ is nonempty for each $x \in D$.
\end{enumerate}

A {\em hybrid time domain} is a subset of $\Re_{\geq 0} \times \Ne$
which is the union of infinitely many sets $[t_j,t_{j+1}] \times
\{j\}$, $j\in \Ne$, or of finitely many such sets, with the last one
of the form $ [t_j,t_{j+1}] \times \{j\}$, $[t_j,t_{j+1}) \times
  \{j\}$, or $[t_j,\infty) \times \{j\}$. A {\em hybrid arc} is a
    function $x: \dom(x) \to \Re^n$, where $\dom(x)$ is a hybrid time
    domain, such that for each $j$, the function $t \mapsto x(t,j)$ is
    locally absolutely continuous on the interval $I_j=\{t: (t,j) \in
    \dom(x)\}$. A {\em solution} of $\cH$ is a hybrid arc $x : \dom(x)
    \to \Re^n$ satisfying the following two conditions.

\noindent
{\em Flow condition.} For each $j \in \Ne$ such that $I_j$ has
nonempty interior,
%
%
%
%\begin{equation}\label{eq:flow}
\[
\begin{array}{ll}
\dot x(t,j) \in F(x(t,j)) &\text{ for almost all } t \in I_j, \\
x(t,j) \in C & \text{ for all } t \in [\min I_j,\sup I_j).
\end{array}
\]
%\end{equation}
%
%
%
{\em Jump condition.} For each $(t,j) \in \dom(x)$ such that $(t,j+1)
\in \dom(x)$,
%
%
%
%
%\begin{equation}\label{eq:jump}
\[
\begin{aligned}
& x(t,j+1) \in G(x(t,j)), \\
& x(t,j) \in D.
\end{aligned}
\]
%\end{equation}
%
%
%
A solution of $\cH$ is {\em maximal} if it cannot be extended.  
In this paper we will only consider maximal solutions, and
  therefore the adjective ``maximal'' will be omitted in what follows.
%
%
%{\color{OliveGreen} In the rest of the paper, we will never consider non-maximal solutions, 
%therefore we will often drop the word ``maximal'' whenever we refer to (maximal) solutions.}
%
%
If $(t_1,j_1), (t_2, j_2) \in \dom(x)$ and $t_1 \leq t_2, j_1 \leq
j_2$, then we write $(t_1,j_1) \preceq (t_2,j_2)$. If at least one
inequality is strict, then we write $(t_1,j_1) \prec (t_2,j_2)$.

A solution $x$ is {\em complete} if $\dom(x)$ is unbounded or,
equivalently, if there exists a sequence $\{ (t_i,j_i)\}_{i \in \Ne}
\subset \dom(x)$ such that $t_i + j_i \to \infty$ as $i \to \infty$.

The set of all maximal solutions of $\cH$  originating from $x_0 \in
\Re^n$ is denoted $\Sol(x_0)$. If $U \subset \Re^n$, then
\[
\Sol(U) :=\bigcup_{x_0 \in U} \Sol(x_0).
\]
We let $\Sol:=\Sol(\Re^n)$. The {\em range} of a hybrid arc $x:\dom(x)
\to \Re^n$ is the set
\[
\rge(x) :=\big\{y \in \Re^n: \big(\exists (t,j) \in \dom(x) \big)
\ y=x(t,j)\big\}.
\]
%
%
%
%
%
%\manfredi{Ho aggiunto questa defn. La uso nella prova del
%  Teorema~\ref{thm:reduction_asy_stability} e nelle defn che seguono.
%}
%
%
If $U \subset \Re^n$, we define
\[
\rge(\Sol(U)):= \bigcup_{x_0 \in U} \rge\big( \Sol(x_0) \big).
\]

\begin{definition}[Forward invariance]
A set $\Gamma \subset \Re^n$ is {\em strongly forward invariant} for
$\cH$ if
%
%
%\luca{I wonder if we should convert all occurrences of "forward invariant" into "strongly forward %invariant" so that we are consistent with the definition in Teel's book.}
%
\[
\rge(\Sol(\Gamma)) \subset \Gamma. 
\]
%
%\luca{Should this be $\rge(\Sol(\Gamma)) \subset \Gamma$? it would be more general in case $\Gamma %\setminus (C \cup D)$ is nonempty}
%
%
In other words, every solution of $\cH$ starting in $\Gamma$ remains
in $\Gamma$.
%
%
% \manfredi{Ho dovuto tenere qui la definizione di weakly forward
%   invariant perch\`e mi serve nella Sezione 3. Quella di weakly
%   backward invariant l'ho spostata nella sezione 5, come avevi
%   richiesto.}
%
%
The set $\Gamma$ is {\em weakly forward invariant} if for
every $x_0 \in \Gamma$ there exists a complete $x \in \Sol(x_0)$ such
that $x(t,j) \in \Gamma$ for all $(t,j) \in \dom(x)$.
\end{definition}

If $\Gamma\subset \Re^n$ is closed, then the {\em restriction} of
$\cH$ to $\Gamma$ is the hybrid system $\cH|_\Gamma :=(C\cap \Gamma,
F,D\cap \Gamma,G)$.  Whenever $\Gamma$ is strongly forward invariant,
solutions that start in $\Gamma$ cannot flow out or jump out of
$\Gamma$. Thus, in this specific case, restricting $\cH$ to $\Gamma$
corresponds to considering only solutions to $\cH$ originating in
$\Gamma$, i.e., $\SolGamma = \Sol(\Gamma)$.

\begin{definition}[stability and attractivity]\label{defn:stability_attractivity}
Let  $\Gamma \subset \Re^n$ be compact. 

\begin{itemize}

\item $\Gamma$ is {\em stable} for $\cH$ if for every $\eps >0$ there
  exists $\delta>0$ such that 
%
%
% \manfredi{Nota l'uso di $\rge$ nelle defn di stabilit\`a.}
%
%
\[
\rge(\Sol(B_\delta(\Gamma))) \subset B_\eps(\Gamma).
\]
%
%
%every $x \in \Sol(B_\delta(\Gamma))$
%  satisfies $x(t,j) \in B_\eps(\Gamma)$ for all $(t,j) \in \dom(x)$.

\item The {\em basin of attraction} of $\Gamma$ is the largest 
set
%% ~\footnote{{\color{OliveGreen} We consider any point outside $C \cup D$ to be in the basin
%% of attraction of any set, because the definition is vacuously defined from the fact that
%% no solutions exist starting outside $C \cup D$.}}
of points $p \in \Re^n$ such that each $x \in \Sol(p)$ is bounded and,
if $x$ is complete, then $|x(t,j)|_\Gamma \to 0$ as $t+j \to \infty$,
$(t,j) \in \dom(x)$.

\item $\Gamma$ is {\em attractive} for $\cH$ if the basin of
  attraction of $\Gamma$ contains $\Gamma$ in its interior.

\item $\Gamma$ is {\em globally attractive} for $\cH$ if its 
  basin of attraction is $\Re^n$. 

\item $\Gamma$ is {\em asymptotically stable} for $\cH$ if it is
  stable and attractive, and $\Gamma$ is {\em globally asymptotically
    stable} if it is stable and globally attractive.

\end{itemize}
Let  $\Gamma \subset \Re^n$ be closed.

\begin{itemize}

\item $\Gamma$ is {\em stable} for $\cH$ if for every $\eps>0$ there
  exists an open set $U$ containing $\Gamma$ such that 
\[
\rge(\Sol(U)) \subset B_\eps(\Gamma).
\]
%
%
%each $x \in
%  \Sol(U)$ satisfies $x(t,j) \in B_\eps(\Gamma)$ for all $(t,j) \in
%  \dom(x)$.

\item The {\em basin of attraction} of $\Gamma$ is the largest set of
  points $p \in \Re^n$ such that for each $x \in \Sol(p)$,
  $|x|_\Gamma$ is bounded and, if $x$ is complete, then
  $|x(t,j)|_\Gamma \to 0$ as $t+j \to \infty$, $(t,j) \in \dom(x)$.

\item $\Gamma$ is {\em attractive} if the basin of attraction of
  $\Gamma$ contains $\Gamma$ in its interior.

\item $\Gamma$ is {\em globally attractive} if its basin of
  attraction is $\Re^n$.

\item $\Gamma$ is {\em asymptotically stable} if it stable and
  attractive, and {\em globally asymptotically stable} if it is stable
  and globally attractive.
\end{itemize}
\end{definition}

\begin{remark}\label{rem:triviality_of_definitions}
When $C \cup D$ is closed, the properties of stability and
attractivity hold trivially for compact sets $\Gamma$ on which there
are no solutions. More precisely, if $\Gamma \subset \Re^n \setminus
(C \cup D)$, then $\Gamma$ is automatically stable and attractive (and
hence asymptotically stable).  Moreover, all points outside $C \cup D$
trivially belong to its basin of attraction.
\end{remark}

%
%
%\manfredi{Mi avevi chiesto di fare un paragone con le defn di
%  stabilit\`a di Teel. Eccolo in questi tre paragrafi.}
%
%\luca{Perfetto! Thanks}
%
\begin{remark}
In~\cite[Definition 7.1]{GoeSanTee12}, the notions of attractivity and
asymptotic stability of compact sets defined above are referred to as
local pre-attractivity and local pre-asymptotic stability. The prefix
``pre'' refers to the fact that the attraction property is only
assumed to hold for complete solutions.  Recent literature on hybrid
systems has dropped this prefix, and in this paper we follow the same
convention.
\end{remark}

\begin{remark}
For the case of closed, non-compact sets,~\cite{GoeSanTee12} adopts
notions of uniform global stability, uniform global pre-attractivity,
and uniform global pre-asymptotic stability (see~\cite[Definition
  3.6]{GoeSanTee12}) that are stronger than the notions presented in
Definition~\ref{defn:stability_attractivity}, but they allow the
authors of~\cite{GoeSanTee12} to give Lyapunov characterizations of
asymptotic stability. In this paper we use weaker definitions to
obtain more general results. Specifically, the results of this paper
whose assumptions concern asymptotic stability of closed sets
(assumptions (ii) and (ii') in Corollary~\ref{cor:asy_stability},
assumptions (i) and (i') in Theorem~\ref{thm:recursive_reduction})
continue to hold when the stronger stability properties
of~\cite{GoeSanTee12} are satisfied.

To illustrate the differences between the above mentioned stability
and attractivity notions for closed sets, in~\cite[Definition
  3.6]{GoeSanTee12} the uniform global stability property requires
that for every $\eps>0$, the open set $U$ of
Definition~\ref{defn:stability_attractivity} be of the form
$B_\delta(\Gamma)$, i.e., a neighborhood of $\Gamma$ of constant
diameter, hence the adjective ``uniform.'' Moreover, \cite[Definition
  3.6]{GoeSanTee12} requires that $\delta \to \infty$ as $\eps \to
\infty$, hence the adjective ``global.'' On the other hand,
Definition~\ref{defn:stability_attractivity} only requires the
existence of a neighborhood $U$ of $\Gamma$, not necessarily of
constant diameter, and without the ``global'' requirement. In
particular, the diameter of $U$ may shrink to zero near points of
$\Gamma$ that are infinitely far from the origin, even as $\eps \to
\infty$. Similarly, the notion of uniform global pre-attractivity
in~\cite[Definition 3.6]{GoeSanTee12} is much stronger than that of
global attractivity in Definition~\ref{defn:stability_attractivity},
for it requires solutions not only to converge to $\Gamma$, but to do
so with a rate of convergence which is uniform over sets of initial
conditions of the form $B_r(\Gamma)$.
\end{remark}

\begin{definition}[local stability and attractivity near a set]
\label{def:local_properties}
Consider two sets $\Gamma_1 \subset \Gamma_2 \subset \Re^n$, and
assume that $\Gamma_1$ is compact.
%
%
%\luca{This is a milder definition (at least its appearance is milder). What do you think?}
%
%
The set $\Gamma_2$ is {\em locally stable near} $\Gamma_1$ for $\cH$
if there exists $r >0$ such that the following property holds. For
every $\eps>0$, there exists $\delta>0$ such that, for each $x \in
\Sol(B_\delta(\Gamma_1))$ and for each $(t,j) \in \dom(x)$, it holds
that if $x(s,k) \in B_r(\Gamma_1)$ for all $(s,k) \in \dom(x)$ with
$(s,k) \preceq (t,j)$, then $x(t,j) \in B_\eps(\Gamma_2)$.  The set
$\Gamma_2$ is {\em locally attractive near $\Gamma_1$} for $\cH$ if
there exists $r>0$ such that $B_r(\Gamma_1)$ is contained in the basin
of attraction of $\Gamma_2$.
\end{definition}

\begin{remark}\label{rem:local_stability}
The notions in Definition~\ref{def:local_properties} originate
  in~\cite{SeiFlo95}.  It is an easy consequence of the definition,
  and it is shown rigorously in the proof of
  Theorem~\ref{thm:reduction_asy_stability}, that local stability of
  $\Gamma_2$ near $\Gamma_1$ is a necessary condition for $\Gamma_1$
  to be stable. In particular, if $\Gamma_1$ is stable, then
  $\Gamma_2$ is locally stable near $\Gamma_1$ for arbitrary
  values\footnote{For this reason, in~\cite{ElHMag13}, local stability
    of $\Gamma_2$ near $\Gamma_1$ is defined by requiring that the
    property holds for any $r>0$.} of $r>0$. Moreover, local
  attractivity of $\Gamma_2$ near $\Gamma_1$ is a necessary condition
  for $\Gamma_1$ to be attractive.  Finally, it is easily seen that if
  $\Gamma_2$ is stable, then $\Gamma_2$ is locally stable near
  $\Gamma_1$, thus local stability of $\Gamma_2$ near $\Gamma_1$ is a
  necessary condition for both the stability of $\Gamma_1$ and the
  stability of $\Gamma_2$.
\end{remark}

\begin{figure}[ht!]
\psfrag{O}{$\Gamma_2$} \psfrag{G}{$\Gamma_1$}
\psfrag{B}{$B_r(\Gamma_1)$} \psfrag{bdg}{$B_\delta(\Gamma_1)$}
\psfrag{beo}{$B_\eps(\Gamma_2)$} \psfrag{1}{$x_1$} \psfrag{2}{$x_2$}
\psfrag{3}{$x_3$} \psfrag{4}{$x_4$}

\centerline{\includegraphics[width=.95\columnwidth]{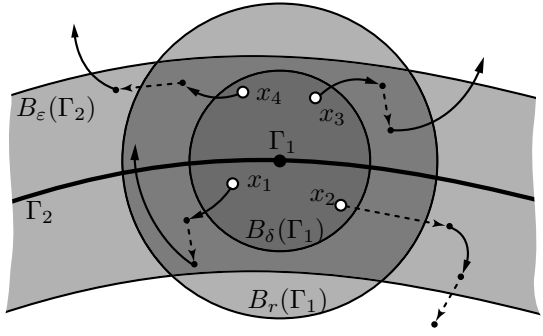}}
\caption{An illustration of local stability of $\Gamma_2$ near
  $\Gamma_1$. Continuous lines denote flow, while dashed lines denote
  jumps. All solutions starting sufficiently close to $\Gamma_1$
  remain close to $\Gamma_2$ so long as they remain in
  $B_r(\Gamma_1)$.  In the figure, the solution from $x_1$ remains in
  $B_r(\Gamma_1)$ and therefore also in $B_\eps(\Gamma_2)$. The
  solution from $x_2$ jumps out of $B_r(\Gamma_1)$, then jumps out of
  $B_\eps(\Gamma_2)$. The solution from $x_3$ flows out of
  $B_r(\Gamma_1)$, then flows out of $B_\eps(\Gamma_2)$. Finally, the
  solution from $x_4$ jumps out of $B_r(\Gamma_1)$, then flows out of
  $B_\eps(\Gamma_2)$.}
\label{fig:local_stab}
\end{figure}

According to Definition~\ref{def:local_properties}, the set $\Gamma_2$
is locally attractive near $\Gamma_1$ if all solutions starting near
$\Gamma_1$ converge to $\Gamma_2$.  Thus $\Gamma_2$ might be locally
attractive near $\Gamma_1$ even when it is not attractive in the sense
of Definition~\ref{defn:stability_attractivity}.  On the other hand,
the set $\Gamma_2$ is locally stable near $\Gamma_1$ if solutions
starting close to $\Gamma_1$ remain close to $\Gamma_2$ so long as
they are not too far from $\Gamma_1$. This notion is illustrated in
Figure~\ref{fig:local_stab}.

%% In Section~\ref{sec:main_results} it is shown that local stability of
%% $\Gamma_2$ near $\Gamma_1$ is a necessary condition for $\Gamma_1$ to
%% be stable, and that local attractivity of $\Gamma_2$ near $\Gamma_1$
%% is necessary for $\Gamma_1$ to be attractive.  These notions originate
%% from~\cite{SeiFlo95}.
%% %% The versions presented here are
%% %% taken from~\cite{ElHMag13}. \mygreen{If you like my definitions, then this last sentence should be revised. Luca}

\begin{definition}[relative properties]
Consider two closed sets $\Gamma_1 \subset \Gamma_2 \subset \Re^n$.
We say that $\Gamma_1$ is, respectively, stable, (globally)
attractive, or (globally) asymptotically stable {\em relative to}
$\Gamma_2$ if $\Gamma_1$ is stable, (globally) attractive, or
(globally) asymptotically stable for $\cH|_{\Gamma_2}$.
\end{definition}

\begin{example}
To illustrate the definition, consider the linear time-invariant
system
\[
\begin{aligned} 
&\dot x_1 = -x_1 \\
&\dot x_2 = x_2,
\end{aligned}
\]
and the sets $\Gamma_1 =\{(0,0)\}$, $\Gamma_2
=\{(x_1,x_2):x_2=0\}$. Even though $\Gamma_1$ is an unstable
equilibrium, $\Gamma_1$ is globally asymptotically stable relative to
$\Gamma_2$. Now consider the planar system expressed in polar
coordinates $(\rho,\theta) \in \Re_{>0} \times \Se^1$ as
\[
\begin{aligned}
& \dot \theta = \sin^2(\theta/2)+(1-\rho)^2 \\
& \dot \rho=0.
\end{aligned}
\]
Let $\Gamma_1$ be the point on the unit circle $\Gamma_1 =
\{(\theta,\rho): \theta=0, \rho=1\}$, and $\Gamma_2$ be the unit
circle, $\Gamma_2 = \{(\theta,\rho): \rho=1\}$.  On $\Gamma_2$, the
motion is described by $\dot \theta = \sin^2(\theta/2)$. We see that
$\dot \theta \geq 0$, and $\dot \theta =0$ if and only if $\theta = 0$
modulo $2\pi$. Thus $\Gamma_1$ is globally attractive relative to
$\Gamma_2$, even though it is not an attractive equilibrium.
\end{example}

The next two results will be useful in the sequel (see also~\cite[Proposition 3.32]{GoeSanTee12}).

\begin{lemma}\label{lem:restrictions_inherit_stability}
For a hybrid system $\cH:=(C,F,D,G)$, if $\Gamma_1 \subset \Re^n$ is a
closed set which is, respectively, stable, attractive, or globally
attractive for $\cH$, then for any closed set ${\Gamma_2} \subset
\Re^n$, $\Gamma_1$ is, respectively, stable, attractive, or globally
attractive for $\cH|_{\Gamma_2}$.
\end{lemma}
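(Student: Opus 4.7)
The plan is to build on a single structural observation about the two systems: since $C \cap \Gamma_2 \subset C$ and $D \cap \Gamma_2 \subset D$, every solution of $\cH|_{\Gamma_2}$ is also a solution of $\cH$, though possibly non-maximal in $\cH$. Any non-maximal solution of $\cH$ can be extended to a maximal one sharing the same initial condition, whose range contains the range of the original. Moreover, if a solution of $\cH|_{\Gamma_2}$ is complete (unbounded hybrid time domain), then it is automatically maximal as a solution of $\cH$, because one cannot prolong a solution past an already unbounded domain. Finally, at any point $p \notin \Gamma_2$ we have $p \notin (C\cap\Gamma_2)\cup(D\cap\Gamma_2)$, so the only maximal solution of $\cH|_{\Gamma_2}$ starting at $p$ is the trivial arc with $\dom(x)=\{(0,0)\}$ and $x(0,0)=p$.

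For stability, given $\eps>0$, I would take the open neighborhood $U$ of $\Gamma_1$ provided by the stability of $\Gamma_1$ for $\cH$, so that $\rge(\Sol_{\cH}(U))\subset B_\eps(\Gamma_1)$; note this forces $U\subset B_\eps(\Gamma_1)$ since trivial solutions at points of $U\setminus (C\cup D)$ contribute their starting point to the range. Pick any $x\in \Sol_{\cH|_{\Gamma_2}}(U)$ with initial condition $p$. If $p\in \Gamma_2$, extend $x$ to a maximal $\tilde x\in \Sol_{\cH}(p)$; then $\rge(x)\subset \rge(\tilde x)\subset B_\eps(\Gamma_1)$. If $p\notin \Gamma_2$, then $x$ is the trivial arc and $\rge(x)=\{p\}\subset U\subset B_\eps(\Gamma_1)$. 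This gives $\rge(\Sol_{\cH|_{\Gamma_2}}(U))\subset B_\eps(\Gamma_1)$, proving stability of $\Gamma_1$ for $\cH|_{\Gamma_2}$. The compact-set case is obtained in the same way with $U=B_\delta(\Gamma_1)$.

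For (global) attractivity, let $\cB$ denote the basin of attraction of $\Gamma_1$ for $\cH$, and set $\cB':=\cB\cup(\Re^n\setminus\Gamma_2)$, which is open and contains $\Gamma_1$ (globally: $\cB'=\Re^n$). I claim $\cB'$ lies in the basin of $\Gamma_1$ for $\cH|_{\Gamma_2}$. For $p\in \cB'\setminus\Gamma_2$, the only maximal solution of $\cH|_{\Gamma_2}$ is trivial: its range is $\{p\}$ (bounded, and $|\cdot|_{\Gamma_1}$ bounded) and it is not complete, so the basin condition holds vacuously. For $p\in \cB\cap \Gamma_2$ and $x\in \Sol_{\cH|_{\Gamma_2}}(p)$: if $\dom(x)$ is bounded, then $x$ is bounded because hybrid arcs are absolutely continuous on each of finitely many bounded flow intervals; if $\dom(x)$ is unbounded, then by the structural observation $x$ is also a complete maximal solution of $\cH$ from $p\in \cB$, hence bounded with $|x(t,j)|_{\Gamma_1}\to 0$. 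Thus $p$ belongs to the basin for $\cH|_{\Gamma_2}$, and attractivity (resp.\ global attractivity) follows. The main obstacle throughout is precisely this bookkeeping: a solution of $\cH|_{\Gamma_2}$ that is maximal in $\cH|_{\Gamma_2}$ need not be maximal in $\cH$, so one must separately handle bounded-domain and complete solutions, together with the trivial solutions produced at points of $\Re^n\setminus \Gamma_2$.
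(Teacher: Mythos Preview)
Your approach is the same as the paper's, whose entire proof is the single sentence ``The result is a consequence of the fact that each solution of $\cH|_{\Gamma_2}$ is also a solution of $\cH$.'' You have supplied the bookkeeping that the paper omits, and most of it is correct.

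There is, however, one incorrect step. In the attractivity part, for $p \in \cB \cap \Gamma_2$ and a non-complete $x \in \Sol_{\cH|_{\Gamma_2}}(p)$, you assert that ``if $\dom(x)$ is bounded, then $x$ is bounded because hybrid arcs are absolutely continuous on each of finitely many bounded flow intervals.'' This is false: hybrid arcs are only \emph{locally} absolutely continuous on each flow interval, and a locally absolutely continuous function on a half-open bounded interval can blow up at the endpoint (e.g., the solution of $\dot x = x^2$, $x(0)=1$, on $[0,1)$). A bounded hybrid time domain does not by itself imply a bounded range; finite escape time is precisely one of the standard ways a maximal hybrid solution can fail to be complete.

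The fix is to reuse the extension device you already employ elsewhere in the proof: extend $x$ to a maximal $\tilde x \in \Sol_{\cH}(p)$. Since $p \in \cB$, the basin definition for $\cH$ guarantees that $\tilde x$ is bounded (compact $\Gamma_1$) or that $|\tilde x|_{\Gamma_1}$ is bounded (closed $\Gamma_1$), and $\rge(x) \subset \rge(\tilde x)$ then transfers this to $x$. With this correction your argument is complete, and in fact the case split between $p\in\Gamma_2$ and $p\notin\Gamma_2$ becomes unnecessary: the extension argument handles every $p\in\cB$ uniformly.
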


\begin{proof}
The result is a consequence of the fact that each solution of $\cH|_{\Gamma_2}$
is also a solution of $\cH$.
\end{proof}
The next result is a partial converse to
Lemma~\ref{lem:restrictions_inherit_stability}.
\begin{lemma}\label{lem:restrictions_imply_attractivity}
For a hybrid system $\cH:=(C,F,D,G)$,
if $\Gamma_1 \subset {\Gamma_2} \subset \Re^n$ are two closed sets such that
$\Gamma_1$ is compact and $\Gamma_1 \subset \interior {\Gamma_2}$, then:
\begin{enumerate}[(a)]
\item  $\Gamma_1$ is stable for $\cH$ if and only if it is stable for
$\cH|_{\Gamma_2}$.

\item If $\Gamma_1$ is stable for $\cH$, then $\Gamma_1$ is attractive for
  $\cH$ if and only if $\Gamma_1$ is attractive for $\cH|_{\Gamma_2}$.
\end{enumerate}
\end{lemma}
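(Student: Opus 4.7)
The plan is to reduce both items to Lemma~\ref{lem:restrictions_inherit_stability}, which already gives the easy direction ($\cH$-property implies $\cH|_{\Gamma_2}$-property, since any solution of the restriction is a solution of $\cH$). So in both (a) and (b) only the converse implications require work, and for those the key geometric input is that $\Gamma_1 \subset \interior \Gamma_2$ lets me choose tubes $B_\eps(\Gamma_1)$ and $B_\delta(\Gamma_1)$ that sit strictly inside $\Gamma_2$. My strategy is then to argue that, on such tubes, solutions of $\cH$ and $\cH|_{\Gamma_2}$ coincide.

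For (a), I would fix $r_0>0$ with $\bar B_{r_0}(\Gamma_1)\subset \Gamma_2$. Given $\eps\in(0,r_0)$, stability of $\Gamma_1$ for $\cH|_{\Gamma_2}$ yields $\delta\in(0,\eps)$ such that every solution of $\cH|_{\Gamma_2}$ starting in $B_\delta(\Gamma_1)$ has range in $B_\eps(\Gamma_1)$. I would then argue by contradiction: take $x\in\Sol_{\cH}(B_\delta(\Gamma_1))$ and suppose $\rge(x)\not\subset B_\eps(\Gamma_1)$. Define the truncation $\tilde{x}$ of $x$ on the maximal initial portion of $\dom(x)$ on which $|x(\cdot,\cdot)|_{\Gamma_1}\leq \eps$. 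By the definition of the truncation and $\bar B_\eps(\Gamma_1)\subset \Gamma_2$, every visited state lies in $\Gamma_2$, so $\tilde{x}$ satisfies the flow and jump conditions of $\cH|_{\Gamma_2}$. There are two exit scenarios to handle: either the solution reaches the boundary of $B_\eps(\Gamma_1)$ during flow (so $\tilde{x}$ ends at a point with distance exactly $\eps$ from $\Gamma_1$), or it jumps outside during a jump from a state in $D\cap\Gamma_2$ to a point outside $B_\eps(\Gamma_1)$. In either case, $\tilde{x}$ can be extended (using the basic assumptions) to a maximal solution of $\cH|_{\Gamma_2}$ whose range contains the exit point outside $B_\eps(\Gamma_1)$, contradicting stability of $\Gamma_1$ for the restriction.

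For (b), with $\Gamma_1$ stable for $\cH$ (hence stable for $\cH|_{\Gamma_2}$ by Lemma~\ref{lem:restrictions_inherit_stability}, or directly by assumption) and attractive for $\cH|_{\Gamma_2}$, let $r\in(0,r_0)$ be such that $B_r(\Gamma_1)$ lies in the basin of attraction of $\Gamma_1$ for $\cH|_{\Gamma_2}$. By stability of $\Gamma_1$ for $\cH$ I can pick $\delta>0$ with $\rge(\Sol_{\cH}(B_\delta(\Gamma_1)))\subset B_r(\Gamma_1)\subset\Gamma_2$. Thus every $x\in\Sol_{\cH}(B_\delta(\Gamma_1))$ has range inside $\Gamma_2$, which lets me reinterpret $x$ as a solution of $\cH|_{\Gamma_2}$ from a point of $B_r(\Gamma_1)$; moreover, maximality in $\cH$ transfers to maximality in $\cH|_{\Gamma_2}$ since any continuation in $\cH|_{\Gamma_2}$ would also be a continuation in $\cH$. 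Attractivity in $\cH|_{\Gamma_2}$ then gives boundedness of $x$ and, when complete, $|x(t,j)|_{\Gamma_1}\to 0$, so $B_\delta(\Gamma_1)$ sits in the $\cH$-basin of attraction of $\Gamma_1$.

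The main obstacle is the truncation-and-extension argument in (a): I must carefully produce, from a solution of $\cH$ that exits $B_\eps(\Gamma_1)$, a genuine maximal solution of $\cH|_{\Gamma_2}$ that still sees the exit point. This requires distinguishing the flow-exit from the jump-exit case and invoking the basic assumptions on $C$, $D$, $F$, $G$ to guarantee maximal extensions exist in $\cH|_{\Gamma_2}$. Once that bookkeeping is in place, (b) is essentially immediate because stability traps solutions of $\cH$ inside $\Gamma_2$, rendering the two solution concepts identical on the chosen neighbourhood.
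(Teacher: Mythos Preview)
Your proof is correct and follows essentially the same approach as the paper: the forward implications come from Lemma~\ref{lem:restrictions_inherit_stability}, and the converse in each part hinges on showing that solutions of $\cH$ and $\cH|_{\Gamma_2}$ coincide on a sufficiently small neighborhood of $\Gamma_1$ contained in $\Gamma_2$. The paper's proof of (a) asserts this coincidence of solution sets in one line (writing $\cS_{\cH|_{\Gamma_2}}(B_\delta(\Gamma_1)) = \Sol(B_\delta(\Gamma_1))$ directly from $B_{\varepsilon'}(\Gamma_1)\subset\Gamma_2$ with $\varepsilon'=\min(\varepsilon,r)$), whereas your truncation/contradiction argument with the flow-exit and jump-exit cases spells out the same fact more explicitly; part (b) is essentially identical to the paper's.
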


\begin{proof}
{\em Part (a). } By Lemma~\ref{lem:restrictions_inherit_stability}, if
$\Gamma_1$ is stable for $\cH$, then it is also stable for
$\cH|_{\Gamma_2}$. Next assume that $\Gamma_1$ is stable for
$\cH|_{\Gamma_2}$. Since $\Gamma_1$ is compact and contained in the
interior of $\Gamma_2$, there exists $r>0$ such that $B_r(\Gamma_1)
\subset \Gamma_2$. For any $\varepsilon>0$, let
$\varepsilon':=\min(\varepsilon,r)$. By the definition of stability of
$\Gamma_1$, there exists $\delta>0$ such that
\begin{equation}\label{eq:dummy}
\rge(\cS_{\cH|_{\Gamma_2}}(B_\delta(\Gamma_1))) \subset
B_{\varepsilon'}(\Gamma_1).
\end{equation}
Since $B_{\varepsilon'}(\Gamma_1) \subset B_r(\Gamma_1) \subset
\Gamma_2$, we have that solutions of $\cH$ and $\cH|_{\Gamma_2}$
originating in $B_\delta(\Gamma_1)$ coincide, i.e.,
\begin{equation}\label{eq:dummy2}
\cS_{\cH|_{\Gamma_2}}(B_\delta(\Gamma_1)) =
\Sol(B_\delta(\Gamma_1)).
\end{equation}
Substituting~\eqref{eq:dummy2} into~\eqref{eq:dummy} and using the
fact that $\varepsilon'\leq \varepsilon$ we get
\[
\rge(\Sol(B_\delta(\Gamma_1))) \subset
B_{\varepsilon'}(\Gamma_1) \subset B_\varepsilon(\Gamma_1),
\]
which proves that $\Gamma_1$ is stable for $\cH$.

{\em Part (b).}
By Lemma~\ref{lem:restrictions_inherit_stability}, if $\Gamma_1$ is
attractive for $\cH$ then it is also attractive for
$\cH|_{\Gamma_2}$. For the converse, assume that $\Gamma_1$ is
attractive for $\cH|_{\Gamma_2}$. Since $\Gamma_1$ is compact and
contained in the interior of ${\Gamma_2}$, there exists
$\varepsilon>0$ such that $B_\varepsilon(\Gamma_1) \subset
{\Gamma_2}$. Since $\Gamma_1$ is stable for $\cH$, there exists
$\delta>0$ such that
\[
\rge(\Sol(B_\delta(\Gamma_1))) \subset B_\varepsilon(\Gamma_1) \subset {\Gamma_2}.
\]
The above implies that solutions of $\cH$ and $\cH|_{\Gamma_2}$ originating in
$B_\delta(\Gamma_1)$ coincide, i.e.,
\begin{equation}\label{eq:solutions_coincide}
\Sol(B_\delta(\Gamma_1)) = \cS_{\cH|_{\Gamma_2}}(B_\delta(\Gamma_1)).
\end{equation}
Since $\Gamma_1$ is attractive for $\cH|_{\Gamma_2}$, the basin of
attraction of $\Gamma_1$ is a neighborhood of $\Gamma_1$, and
therefore there exists $\delta>0$ small enough to
ensure~\eqref{eq:solutions_coincide} and to ensure that
$B_\delta(\Gamma_1)$ is contained in the basin of
attraction. By~\eqref{eq:solutions_coincide}, $B_\delta(\Gamma_1)$ is
also contained in the basin of attraction of $\Gamma_1$ for system
$\cH$, from which it follows that $\Gamma_1$ is attractive for $\cH$.
\end{proof}

\section{The reduction problem}\label{sec:reduction_problem}

In this section we formulate the reduction problem, discuss its
relevance, and present two theoretical applications: the stability of
compact sets for cascade-connected hybrid systems, and a result
concerning global attractivity of compact sets for hybrid systems with
outputs that converge to zero.

\begin{problem*}{Reduction Problem} Consider a hybrid system $\cH$ satisfying
the Basic Assumptions, and two sets
$\Gamma_1 \subset \Gamma_2 \subset \Re^n$, with $\Gamma_1$ compact and
$\Gamma_2$ closed.  Suppose that $\Gamma_1$ enjoys property $P$
relative to $\Gamma_2$, where $P \in \{$stability, attractivity,
global attractivity, asymptotic stability, global asymptotic
stability$\}$. We seek conditions under which property $P$ holds
relative to $\Re^n$.
\end{problem*}

As mentioned in the introduction, this problem was first formulated by
Paul Seibert in 1969-1970~\cite{Sei69,Sei70}. The solution in the
context of hybrid systems is presented in
Theorems~\ref{thm:reduction_stability},~\ref{thm:reduction_attractivity},~\ref{thm:reduction_asy_stability}
in the next section.

To illustrate the reduction problem, suppose we wish to determine
whether a compact set $\Gamma_1$ is asymptotically stable, and suppose
that $\Gamma_1$ is contained in a closed set $\Gamma_2$, as
illustrated in Figure~\ref{fig:reduction}. In the reduction framework,
the stability question is decomposed into two parts: (1) Determine
whether $\Gamma_1$ is asymptotically stable relative to $\Gamma_2$;
(2) determine whether $\Gamma_2$ satisfies additional suitable
properties (Theorem~\ref{thm:reduction_asy_stability} in
Section~\ref{sec:main_results} states precisely the required
properties). In some cases, these two questions might be easier to
answer than the original one, particularly when $\Gamma_2$ is
  strongly forward invariant, since in this case question (1) would
  typically involve a hybrid system on a state space of lower
  dimension. This sort of decomposition occurs frequently in control
theory, either for convenience or for structural necessity, as we now
illustrate.

\begin{figure}[htb]
\psfrag{1}[c]{$\Gamma_1$} \psfrag{2}{$\Gamma_2$} \psfrag{?}{?}
\centerline{\includegraphics[width=.99\columnwidth]{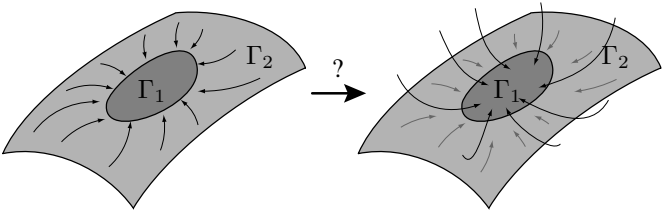}}
\caption{Illustration of the reduction problem when $\Gamma_2$ is
  strongly forward invariant.}
\label{fig:reduction}
\end{figure}

In the context of control systems, the sets $\Gamma_1 \subset
\Gamma_2$ might represent two control specifications organized
hierarchically: the specification associated with set $\Gamma_2$ has
higher priority than that associated with set $\Gamma_1$. Here, the
reduction problem stems from the decomposition of the control design
into two steps: meeting the high-priority specification first, i.e.,
stabilize $\Gamma_2$; then, assuming that the high-priority
specification has been achieved, meet the low-priority specification,
i.e., stabilize $\Gamma_1$ relative to $\Gamma_2$. This point of view
is developed in~\cite{ElHMag13}, and has been applied to the
almost-global stabilization of VTOL vehicles~\cite{RozMag14},
distributed control~\cite{ElHMag13_2,ThuConHu16}, virtual holonomic
constraints~\cite{MagCon13},
robotics~\cite{MohRezMagPet15,OttDieAlb15}, and static or dynamic
allocation of nonlinear redundant actuators~\cite{SassanoEJC16}.
Similar ideas have also been adopted in~\cite{MichielettoIFAC17},
where the concept of local stability near a set, introduced in
Definition~\ref{def:local_properties}, is key to ruling out situations
where the feedback stabilizer may generate solutions that blow up to
infinity.  In the hybrid context, the hierarchical viewpoint described
above has been adopted in~\cite{BisoffiAuto17} to deal with unknown 
jump times in hybrid observation of periodic hybrid exosystems, while
discrete-time results are used in the proof of GAS reported
in~\cite{AlessandriAuto18} for so-called stubborn observers in
discrete time.  In the case of more than two control specifications,
one has the following.

\begin{problem*}{Recursive Reduction Problem}  
Consider a hybrid system $\cH$ satisfying the Basic Assumptions, and
$l$ closed sets $\Gamma_1 \subset \cdots \subset \Gamma_l \subset
\Re^n$, with $\Gamma_1$ compact.  Suppose that $\Gamma_i$ enjoys
property $P$ relative to $\Gamma_{i+1}$ for all $i \in \{1,\ldots,
l\}$, where $P \in \{$stability, attractivity, global attractivity,
asymptotic stability, global asymptotic stability$\}$. We seek
conditions under which the set $\Gamma_1$ enjoys property $P$ relative
to $\Re^n$.

The solution of this problem is found in
Theorem~\ref{thm:recursive_reduction} in the next section.  It is
shown in~\cite{ElHMag13} that the backstepping stabilization technique
can be recast as a recursive reduction problem.
\end{problem*}

As mentioned earlier, the reduction problem may emerge from structural
considerations, such as when the hybrid system is the cascade
interconnection of two subsystems.

%% In addition to the general considerations above, the reduction problem
%% has a number of direct consequences and applications, some of which
%% are explored, e.g., in~\cite{ElHMag10,ElHMag13} for continuous
%% dynamical systems. Here we present two applications to hybrid
%% systems. In Section~\ref{sec:example}, an example is worked out in
%% detail.

\salt
\noindent
{\bf Cascade-connected hybrid systems.}  Consider a hybrid system $\cH
= (C,F$, $D,G)$, where $C = C_1 \times C_2 \subset \Re^{n_1} \times
\Re^{n_2}$, $D = D_1 \times D_2 \subset \Re^{n_1} \times \Re^{n_2}$
are closed sets, and $F: \Re^{n_1+n_2} \tto \Re^{n_1+n_2}$, $G:
\Re^{n_1+n_2} \tto \Re^{n_1+n_2}$ are maps satisfying the Basic
Assumptions. Suppose that $F$ and $G$ have the upper triangular
structure
%
%
%
%
%\manfredi{Hai detto che questa parte non funziona ma non vedo il problema}
%\luca{Ne avevamo poi parlato. Adesso con la roba verde e' tutto a posto}
%
%
\begin{equation}\label{eq:upper_triangular}
F(x^1,x^2) = \begin{bmatrix} F_1(x^1,x^2) \\ F_2(x^2)
\end{bmatrix}, \ G(x^1,x^2) = \begin{bmatrix} G_1(x^1,x^2) \\ G_2(x^2)
\end{bmatrix},
\end{equation}
where $(x^1,x^2) \in \Re^{n_1} \times \Re^{n_2}$.  Define $\hat F_1
:\Re^{n_1} \tto \Re^{n_1}$ and $\hat G_1 : \Re^{n_1} \tto \Re^{n_1}$
as
\begin{equation}\label{eq:upper_triangular2}
\hat F_1(x^1):=F_1(x^1,0), \ \hat G_1(x^1):=G_1(x^1,0).
\end{equation}
With these definitions, we can view $\cH$ as the cascade connection of
the hybrid systems
\[
\cH_1= (C_1,\hat F_1,D_1,\hat G_1), \ \cH_2 = (C_2,F_2,D_2,G_2),
\]
with $\cH_2$ driving $\cH_1$.  The following result is a corollary of
Theorem~\ref{thm:reduction_asy_stability} in
Section~\ref{sec:main_results}.  It generalizes to the hybrid setting
classical results for continuous time-invariant dynamical systems in,
e.g.,~\cite{Vid80,SeiSua90}. Using
Theorems~\ref{thm:reduction_stability}
and~\ref{thm:reduction_attractivity}, one may formulate analogous
results for the properties of attractivity and stability.
\begin{proposition}\label{prop:stability_cascades}
Consider the hybrid system $\cH:=(C_1 \times C_2, F, D_1\times
D_2,G)$, with maps $F, G$ given in~\eqref{eq:upper_triangular}, and
the two hybrid subsystems $\cH_1 := (C_1,\hat F_1, D_1,\hat G_1)$ and
$\cH_2:=(C_2,F_2,D_2,G_2)$ satisfying the Basic Assumptions, with maps
$\hat F_1,\hat G_1$ given in~\eqref{eq:upper_triangular2}. Let $\hat
\Gamma_1 \subset \Re^{n_1}$ be a compact set, and denote
\begin{equation}\label{eq:cascade:Gamma1}
\Gamma_1 = \{(x^1,x^2) \in \Re^{n_1} \times \Re^{n_2} : x^1 \in \hat
\Gamma_1, \, x^2=0\}.
\end{equation}
Suppose that $0 \in C_2 \cup D_2$. Then the following holds:

\begin{enumerate}[(i)]

\item $\Gamma_1$ is asymptotically stable for $\cH$ if $\hat \Gamma_1$
  is asymptotically stable for $\cH_1$ and $0 \in \Re^{n_2}$ is
  asymptotically stable for $\cH_2$.

\item $\Gamma_1$ is globally asymptotically stable for $\cH$ if $\hat
  \Gamma_1$ is globally asymptotically stable for $\cH_1$, $0 \in
  \Re^{n_2}$ is globally asymptotically stable for $\cH_2$, and all
  solutions of $\cH$ are bounded.

\end{enumerate}
\end{proposition}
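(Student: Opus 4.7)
The plan is to apply Theorem~\ref{thm:reduction_asy_stability} with the intermediate closed set $\Gamma_2 := \Re^{n_1}\times \{0\}$, which clearly contains $\Gamma_1$. Two structural consequences of the triangular form~\eqref{eq:upper_triangular}--\eqref{eq:upper_triangular2} drive the argument. First, for any solution $(x^1,x^2)$ of $\cH$, its $x^2$-component is a solution of $\cH_2$, because $F_2$ and $G_2$ do not depend on $x^1$ and because $C,D$ are product sets. Second, since $F_1(\cdot,0)=\hat F_1$ and $G_1(\cdot,0)=\hat G_1$, and since (using $0\in C_2\cup D_2$ and the stability of $\{0\}$ for $\cH_2$) the $x^2$-component is forced to remain at $0$ along any solution of $\cH|_{\Gamma_2}$, the solutions of $\cH|_{\Gamma_2}$ are exactly the hybrid arcs $(x^1,0)$ whose $x^1$-component is a solution of $\cH_1$ on a possibly pruned hybrid time domain.

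Using these correspondences I transfer the hypotheses. Asymptotic stability of $\hat\Gamma_1$ for $\cH_1$ gives, via the second correspondence, asymptotic stability of $\Gamma_1$ for $\cH|_{\Gamma_2}$, i.e., asymptotic stability of $\Gamma_1$ relative to $\Gamma_2$. To obtain local stability and local attractivity of $\Gamma_2$ near $\Gamma_1$, I exploit the identity $|(x^1,x^2)|_{\Gamma_2}=|x^2|$: every trajectory of $\cH$ starting in a small ball $B_r(\Gamma_1)$ has $|x^2(0,0)|<r$, and the first correspondence combined with stability and attractivity of $0$ for $\cH_2$ keeps $|x^2(t,j)|$ small throughout and drives it to zero along complete solutions. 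Theorem~\ref{thm:reduction_asy_stability} then yields asymptotic stability of $\Gamma_1$ for $\cH$, proving part (i).

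For part (ii), the same transfer upgrades everything to the global level: global asymptotic stability of $\hat\Gamma_1$ for $\cH_1$ gives global asymptotic stability of $\Gamma_1$ relative to $\Gamma_2$, and global asymptotic stability of $0$ for $\cH_2$ gives $|x^2(t,j)|\to 0$ along every complete solution of $\cH$. The assumption that all solutions of $\cH$ are bounded rules out escape in the $x^1$ direction and, combined with $|x|_{\Gamma_2}=|x^2|\to 0$, produces global attractivity of $\Gamma_2$ for $\cH$; the global version of Theorem~\ref{thm:reduction_asy_stability} then delivers global asymptotic stability of $\Gamma_1$ for $\cH$.

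The main technical obstacle I anticipate is the second correspondence in the edge cases allowed by $0\in C_2\cup D_2$. If $0$ lies in only one of $C_2$ and $D_2$, jumps or flows through $\Gamma_2$ are pruned in $\cH|_{\Gamma_2}$, so the solutions of $\cH|_{\Gamma_2}$ become a proper subset of the embedded $\cH_1$-solutions. This only strengthens the transferred asymptotic stability, so the reduction theorem still applies, but writing the case analysis cleanly, and verifying that the stability of $\{0\}$ for $\cH_2$ forces $x^2$ to remain at $0$ throughout any solution of $\cH|_{\Gamma_2}$, is the most delicate step of the argument.
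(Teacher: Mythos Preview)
Your approach is essentially the paper's: set $\Gamma_2=\Re^{n_1}\times\{0\}$, transfer (global) asymptotic stability of $\hat\Gamma_1$ for $\cH_1$ to (global) asymptotic stability of $\Gamma_1$ relative to $\Gamma_2$ via the structure of $\cH|_{\Gamma_2}$, use the projection $x\mapsto x^2$ and (global) asymptotic stability of $0$ for $\cH_2$ to get the required properties of $\Gamma_2$, and then invoke Theorem~\ref{thm:reduction_asy_stability}. The only cosmetic difference is that the paper establishes (global) asymptotic stability of $\Gamma_2$ outright (so effectively goes through Corollary~\ref{cor:asy_stability}), whereas you verify the local-stability-near-$\Gamma_1$ and local-attractivity-near-$\Gamma_1$ hypotheses of Theorem~\ref{thm:reduction_asy_stability} directly; both routes are immediate from stability of $\{0\}$ for $\cH_2$ and the identity $|(x^1,x^2)|_{\Gamma_2}=|x^2|$.
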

 
The result above is obtained from
Theorem~\ref{thm:reduction_asy_stability} in
Section~\ref{sec:main_results} setting $\Gamma_1$ as
in~\eqref{eq:cascade:Gamma1}, and $\Gamma_2 = \{(x_1,x_2) \in
\Re^{n_1} \times \Re^{n_2} : x_2=0\}$. The restriction
$\cH|_{\Gamma_2}$ is given by
\[
\left.{\mathcal H}\right|_{\Gamma_2}= \left(C_1 \times
\{0\}, \begin{bmatrix} F_1(x_1,0) \\ F_2(0) \end{bmatrix}, D_1 \times
\{0\}, \begin{bmatrix} G_1(x_1,0) \\ G_2(0) \end{bmatrix} \right),
\]
from which it is straightforward to see that $\Gamma_1$ is (globally)
asymptotically stable relative to $\Gamma_2$ if and only if $\hat
\Gamma_1$ is (globally) asymptotically stable for $\cH_1$. It is also
clear that if $0 \in \Re^{n_2}$ is (globally) asymptotically stable
for $\cH_2$, then $\Gamma_2$ is (globally) asymptotically stable for
$\cH$. 
The converse, however, is not true. Namely, the (global)
asymptotic stability of $\Gamma_2$ for $\cH$ does not imply that $0
\in \Re^{n_2}$ is (globally) asymptotically stable for $\cH_2$, which
is why Proposition~\ref{prop:stability_cascades} states only
sufficient conditions.  The reason is that the set of hybrid arcs
$x_2(t,j)$ generated by solutions of $\cH$ is generally smaller than
the set of solutions of $\cH_2$. This phenomenon is illustrated in the
next example.

\begin{example}\label{example:cascade}
Consider the cascade connected system $\cH=(C_1 \times C_2, F, D_1
\times D_2, G)$, with $C_1 = \{1\}$, $C_2 = \Re$, $D_1=D_2
=\emptyset$, and
\[
F(x_1,x_2) = \begin{bmatrix} 1 \\ x_2
\end{bmatrix}.
\]
All solutions of $\cH$ have the form $(1,x_2(0,0))$, and are defined
only at $(t,j)=(0,0)$.  Since the origin $(x_1,x_2)=(0,0)$ is not
contained in $C \cup D$, it is trivially asymptotically stable for
$\cH$ (see Remark~\ref{rem:triviality_of_definitions}). Moreover,
there are no complete solutions, and all solutions are constant, hence
bounded, which implies that the basin of attraction of the origin is
the entire $\Re^2$. Hence the origin is globally asymptotically stable
for $\cH$.  On the other hand, $\cH_2$ is the linear time-invariant 
continuous-time system on
$\Re$ with dynamics $\dot x_2 = x_2$, clearly unstable. This example
shows that the condition, in
Proposition~\ref{prop:stability_cascades}, that $0$ be (globally)
asymptotically stable for $\cH_2$ is not necessary.
\end{example}

Proposition \ref{prop:stability_cascades} is to be compared to
  \cite[Theorem 1]{Tee10}, where the author presents an analogous
  result for a different kind of cascaded hybrid system. The notion of
  cascaded hybrid system used in
  Proposition~\ref{prop:stability_cascades} is one in which a jump is
  possible only if the states $x^1$ and $x^2$ are simultaneously in
  their respective jump sets, $D_1$ and $D_2$, and a jump event
  involves both states, simultaneously. On the other hand, the notion
  of cascaded hybrid system proposed in~\cite{Tee10} is one in which
  jumps of $x^1$ and $x^2$ occur independently of one another, so that
  when $x^1$  jumps nontrivially, $x^2$ remains constant, and vice versa. Moreover,
  in~\cite{Tee10} the jump and flow sets are not expressed as
  Cartesian products of sets in the state spaces of the two
  subsystems.

Another circumstance in which the reduction problem plays a prominent
role is the notion of detectability for systems with outputs.

\salt
\noindent
{\bf Output zeroing with detectability.}  
%
% \manfredi{I added this assumption because, without it, strong forward
%   invariance would not imply weak forward invariance, and the
%   conclusions of this part would be incorrect. I made small changes in
%   the following accordingly.}
%
Consider a hybrid system $\cH$ satisfying the Basic Assumptions, with
a continuous output function $h: \Re^n \to \Re^k$, and let $\Gamma_1$
be a compact, strongly forward invariant subset of $h^{-1}(0)$.
Assume that all solutions on $\Gamma_1$ are complete.  Suppose that
all $x \in \Sol$ are bounded. Under what circumstances does the
property $h(x(t,j)) \to 0$ for all complete $x \in \Sol$ imply that
$\Gamma_1$ is globally attractive?  This question arises in the
context of passivity-based stabilization of
equilibria~\cite{ByrIsiWil91} and closed sets~\cite{ElHMag10} for
continuous control systems. In the hybrid systems setting, a similar
question arises when using virtual constraints to stabilize hybrid
limit cycles for biped robots
(e.g.,~\cite{PleGriWesAbb03,WesGriKod03,WesGriCheChoMor07}). In this
case the zero level set of the output function is the virtual
constraint.

Let $\Gamma_2$ denote the maximal weakly forward invariant subset
contained in $h^{-1}(0)$.
%
%
%\manfredi{Qui uso ``weakly forward invariant''. Prima avevo
%  erroneamente usato ``forward invariant''. Ho aggiunto un breve
%  ragionamento per dire che $\Gamma_2$ \`e un insieme chiuso. Posso
%  espanderlo se volete.}
%\luca{Ora che ho spostato qui la prova, dovrebbe andar bene cosi' direi. Forse potremmo addirittura %accorciare la discussione}
%
%
Using the sequential compactness of the space of solutions of
$\cH$~\cite[Theorem 4.4]{GoeTee06}, one can show that the closure of a
weakly forward invariant set is weakly forward invariant.  This fact
and the maximality of $\Gamma_2$ imply that $\Gamma_2$ is closed.
Furthermore, since $\Gamma_1$ is strongly forward invariant, contained
in $h^{-1}(0)$, and all solutions on it are complete, necessarily
$\Gamma_1 \subset \Gamma_2$. It turns out (see the proof of
Proposition~\ref{prop:detectability} below) that any bounded complete
solution $x$ such that $h(x(t,j)) \to 0$ converges to $\Gamma_2$.

In light of the discussion above, the question we asked earlier can be
recast as a reduction problem: Suppose that $\Gamma_2$ is globally
attractive. What stability properties should $\Gamma_1$ satisfy
relative to $\Gamma_2$ in order to ensure that $\Gamma_1$ is globally
attractive for $\cH$?  The answer, provided by
Theorem~\ref{thm:reduction_attractivity} in
Section~\ref{sec:main_results}, is that $\Gamma_1$ should be globally
asymptotically stable relative to $\Gamma_2$ (attractivity is not
enough, as shown in Example~\ref{ex:attractivity:counterexample}
below).

Following\footnote{In~\cite{SanGoeTee07}, the authors adopt a
  different definition of detectability, one that requires $\Gamma_1$
  to be globally attractive, instead of globally asymptotically
  stable, relative to $\Gamma_2$. When they employ this property,
  however, they make the extra assumption that $\Gamma_1$ be stable
  relative to $\Gamma_2$.}~\cite{ElHMag10}, the hybrid system $\cH$ is
said to be {\em $\Gamma_1$-detectable from $h$} if $\Gamma_1$ is
globally asymptotically stable relative to $\Gamma_2$, where
$\Gamma_2$ is the maximal weakly forward invariant subset contained in
$h^{-1}(0)$.

Using the reduction theorem for attractivity in
Section~\ref{sec:main_results}
(Theorem~\ref{thm:reduction_attractivity}), we get the answer to the
foregoing output zeroing question.

\begin{proposition}\label{prop:detectability}
Let $\cH$ be a hybrid system satisfying the Basic Assumptions, $h :
\Re^n \to \Re^k$ a continuous function, and $\Gamma_1 \subset
h^{-1}(0)$ be a compact set which is strongly forward invariant for
$\cH$, such that all solutions from $\Gamma_1$ are complete.  If 1)
$\cH$ is $\Gamma_1$-detectable from $h$, 2) each $x \in \Sol$ is
bounded, and 3) all complete $x \in \Sol$ are such that $h(x(t,j)) \to
0$, then $\Gamma_1$ is globally attractive. 
\end{proposition}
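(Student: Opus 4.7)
The plan is to reduce the statement to Theorem~\ref{thm:reduction_attractivity} applied to the pair $\Gamma_1 \subset \Gamma_2$, where $\Gamma_2$ denotes the maximal weakly forward invariant subset of $h^{-1}(0)$. As already observed in the paragraph preceding the statement, the sequential compactness of the hybrid solution set~\cite[Theorem~4.4]{GoeTee06} guarantees that the closure of any weakly forward invariant set is weakly forward invariant, so maximality of $\Gamma_2$ forces $\Gamma_2$ to be closed; strong forward invariance of $\Gamma_1 \subset h^{-1}(0)$ together with completeness of all solutions originating in $\Gamma_1$ yields $\Gamma_1 \subset \Gamma_2$. The $\Gamma_1$-detectability hypothesis is precisely global asymptotic stability of $\Gamma_1$ relative to $\Gamma_2$, which is the relative hypothesis required by Theorem~\ref{thm:reduction_attractivity}. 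What remains to verify, in order to invoke that theorem, is global attractivity of $\Gamma_2$ itself for $\cH$.

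The key technical step is therefore the claim that every bounded complete $x \in \Sol$ with $h(x(t,j)) \to 0$ as $t+j \to \infty$ satisfies $|x(t,j)|_{\Gamma_2} \to 0$. I would argue by contradiction: suppose there exist $\eta > 0$ and a sequence $(t_i,j_i) \in \dom(x)$ with $t_i + j_i \to \infty$ and $|x(t_i,j_i)|_{\Gamma_2} \geq \eta$. Boundedness of $x$ lets us extract a subsequence with $x(t_i,j_i) \to x^\star$, where $|x^\star|_{\Gamma_2} \geq \eta$. Shifting the solution by $(t_i,j_i)$ and appealing to the sequential compactness of maximal solutions~\cite[Theorem~4.4]{GoeTee06} produces, as a graphical limit of the shifted arcs, a hybrid arc $y \in \Sol(x^\star)$; because $x$ is complete and bounded, the shifted solutions have hybrid time domains whose horizons grow without bound, so the limit arc $y$ is itself complete. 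Continuity of $h$ combined with $h(x(t,j)) \to 0$ then forces $h(y(\tau,k)) = 0$ on all of $\dom(y)$, hence $\rge(y) \subset h^{-1}(0)$. Consequently $\Gamma_2 \cup \rge(y)$ is weakly forward invariant and contained in $h^{-1}(0)$, and maximality of $\Gamma_2$ forces $x^\star = y(0,0) \in \Gamma_2$, contradicting $|x^\star|_{\Gamma_2} \geq \eta$.

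With the claim established, assumption~2 and assumption~3 jointly imply that every $p \in \Re^n$ belongs to the basin of attraction of $\Gamma_2$: each $x \in \Sol(p)$ is bounded, and if it is complete it satisfies $h(x(t,j)) \to 0$, hence $|x(t,j)|_{\Gamma_2} \to 0$. Therefore $\Gamma_2$ is globally attractive for $\cH$, and Theorem~\ref{thm:reduction_attractivity} applied to the pair $\Gamma_1 \subset \Gamma_2$ delivers global attractivity of $\Gamma_1$ for $\cH$.

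The main obstacle is the graphical-limit step in the middle paragraph: one must carefully invoke the hybrid compactness result to ensure that the limiting arc $y$ is complete and that the continuity of $h$, coupled with graphical convergence, transfers the zeroing property from $x$ to $y$ pointwise on $\dom(y)$. Once this is settled, the remainder of the argument is a routine verification of the hypotheses of the reduction theorem for attractivity.
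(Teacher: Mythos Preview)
Your proposal is correct and follows the same overall strategy as the paper: verify the hypotheses of Theorem~\ref{thm:reduction_attractivity} for the pair $\Gamma_1 \subset \Gamma_2$, with the only nontrivial step being global attractivity of $\Gamma_2$. The difference lies in how that step is executed. The paper dispatches it by invoking the $\omega$-limit set lemma~\cite[Lemma~3.3]{SanGoeTee07}: for any bounded complete $x$, the set $\Omega(x)$ is nonempty, compact, weakly invariant, and is the smallest closed set approached by $x$; continuity of $h$ together with $h(x(t,j)) \to 0$ gives $\Omega(x) \subset h^{-1}(0)$, and weak forward invariance plus maximality of $\Gamma_2$ force $\Omega(x) \subset \Gamma_2$, hence $x \to \Gamma_2$. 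Your contradiction argument via shifted tails and graphical limits is essentially a hand-rolled proof of the relevant part of that lemma, and the ``main obstacle'' you flag---completeness of the limiting arc and pointwise transfer of $h \equiv 0$ under graphical convergence---is exactly the technical content already packaged there. Both routes are sound; citing the $\omega$-limit set result is shorter and sidesteps the delicate verification you would otherwise have to carry out.
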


\begin{proof}%{Proposition~\ref{prop:detectability}.}
%
%
%\luca{Ho spostato la prova qui nella riorganizzazione}
%
%
Let $\Gamma_2$ be the maximal weakly forward invariant subset of
$h^{-1}(0)$.  This set is closed by sequential compactness of the
space of solutions of $\cH$~\cite[Theorem 4.4]{GoeTee06}. By
assumption, any $x \in \Sol$ is bounded.  If $x \in \Sol$ is complete,
by~\cite[Lemma~3.3]{SanGoeTee07}, the positive limit set $\Omega(x)$
is nonempty, compact, and weakly invariant. Moreover, $\Omega(x)$ is
the smallest closed set approached by $x$. Since $h(x(t,j)) \to 0$ and
$h$ is continuous, $\Omega(x) \subset h^{-1}(0)$. Since $\Omega(x)$ is
weakly forward invariant and contained in $h^{-1}(0)$, necessarily
$\Omega(x) \subset \Gamma_2$. Thus $\Gamma_2$ is globally attractive
for $\cH$.  Since $\Gamma_1$ is strongly forward invariant, contained
in $h^{-1}(0)$, and on it all solutions are complete, $\Gamma_1$ is
contained in $\Gamma_2$, the maximal set with these properties.  By
the $\Gamma_1$-detectability assumption, $\Gamma_1$ is globally
asymptotically stable relative to $\Gamma_2$. By
Theorem~\ref{thm:reduction_attractivity}, we conclude that $\Gamma_1$
is globally attractive.
\end{proof}

\section{Main results}\label{sec:main_results}
  
In this section we solve the reduction problem, presenting reduction
theorems for stability, (global) attractivity, and (global) asymptotic
stability. We also present the solution of the recursive reduction
problem for the property of asymptotic stability.

\begin{theorem}[Reduction theorem for 
stability]\label{thm:reduction_stability} For a hybrid system $\cH$
  satisfying the Basic Assumptions, consider two sets $\Gamma_1
  \subset \Gamma_2 \subset \Re^n$, with $\Gamma_1$ compact and
  $\Gamma_2$ closed.
 If
\begin{enumerate}[(i)]
\item $\Gamma_1$ is asymptotically stable relative to $\Gamma_2$,

\item $\Gamma_2$ is locally stable near $\Gamma_1$,
\end{enumerate}
then $\Gamma_1$ is stable for $\cH$.
\end{theorem}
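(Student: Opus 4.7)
The natural strategy is a contradiction argument, combined with the sequential compactness theorem for solutions of hybrid systems (\cite[Theorem 4.4]{GoeTee06}). First I would fix $\varepsilon>0$ and, using Remark~\ref{rem:local_stability}, assume without loss of generality that $\varepsilon<r$, where $r$ is furnished by the local stability of $\Gamma_2$ near $\Gamma_1$. The plan is to suppose that stability of $\Gamma_1$ for $\cH$ fails at level $\varepsilon$, produce a sequence of initial conditions $p_i$ with $|p_i|_{\Gamma_1}\to 0$ and associated solutions $x_i\in\Sol(p_i)$ that escape from $B_\varepsilon(\Gamma_1)$ at hybrid times $(t_i,j_i)\in\dom(x_i)$, and then extract a graphical limit whose behaviour contradicts stability of $\Gamma_1$ relative to $\Gamma_2$.

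Concretely, I would take $(t_i,j_i)$ to be the earliest escape time and define $\tilde x_i$ to be the truncation of $x_i$ to the hybrid time interval $\{(s,k)\in\dom(x_i):(s,k)\preceq(t_i,j_i)\}$. By construction $\rge(\tilde x_i)\subset \bar B_\varepsilon(\Gamma_1)\subset\bar B_r(\Gamma_1)$, so these truncations are uniformly bounded and have $\tilde x_i(0,0)=p_i$ approaching the compact set $\Gamma_1$. The graphical-compactness result then yields a subsequence (still indexed by $i$) converging graphically to a hybrid arc $x^\star$ defined on a hybrid time domain containing a limiting escape time $(t^\star,j^\star)$ with $|x^\star(t^\star,j^\star)|_{\Gamma_1}\geq\varepsilon$ and $x^\star(0,0)\in\Gamma_1$; by outer semicontinuity of $F,G$, $x^\star$ is itself a solution of $\cH$.

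The key step is to show that $\rge(x^\star)\subset\Gamma_2$, so that $x^\star$ is actually a solution of $\cH|_{\Gamma_2}$. Fix any $\varepsilon'>0$ and let $\delta'>0$ be the constant provided by local stability of $\Gamma_2$ near $\Gamma_1$. Since $|p_i|_{\Gamma_1}\to 0$, eventually $p_i\in B_{\delta'}(\Gamma_1)$, and since each $\tilde x_i$ remains in $B_r(\Gamma_1)$ by construction, we obtain $\rge(\tilde x_i)\subset B_{\varepsilon'}(\Gamma_2)$ for all large $i$. Passing to the graphical limit and using that $\Gamma_2$ is closed gives $\rge(x^\star)\subset\bar B_{\varepsilon'}(\Gamma_2)$; since $\varepsilon'$ is arbitrary, $\rge(x^\star)\subset\Gamma_2$. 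Now invoke hypothesis (i): asymptotic stability of $\Gamma_1$ relative to $\Gamma_2$ implies in particular stability relative to $\Gamma_2$, so there exists $\delta_\star>0$ such that every solution of $\cH|_{\Gamma_2}$ starting in $B_{\delta_\star}(\Gamma_1)\cap\Gamma_2$ stays in $B_{\varepsilon/2}(\Gamma_1)$. Since $x^\star(0,0)\in\Gamma_1\subset B_{\delta_\star}(\Gamma_1)\cap\Gamma_2$, we conclude $\rge(x^\star)\subset\bar B_{\varepsilon/2}(\Gamma_1)$, contradicting $|x^\star(t^\star,j^\star)|_{\Gamma_1}\geq\varepsilon$.

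\textbf{Main obstacle.} The technically delicate step is ensuring that the escape events at $(t_i,j_i)$ transfer cleanly through the graphical limit to produce an escape event for $x^\star$. Two situations require care: the possibility that $t_i+j_i\to\infty$, in which case one must rescale or argue that the truncated arcs have uniformly bounded horizon (which follows once the sequence of escape times is extracted along a further subsequence to a finite limit or handled directly); and the possibility that the escape happens precisely at a jump, so that $|x_i(t_i,j_i)|_{\Gamma_1}$ can exceed $\varepsilon$ while $|x_i(s,k)|_{\Gamma_1}<\varepsilon$ for all $(s,k)\prec(t_i,j_i)$. In the latter case, outer semicontinuity of $G$ is needed to conclude that the limiting jump sends $x^\star$ to a point with $|x^\star|_{\Gamma_1}\geq\varepsilon$, which still contradicts the bound $\varepsilon/2$ just derived.
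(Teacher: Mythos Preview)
Your contradiction-via-compactness approach differs from the paper's proof, which is constructive: the paper applies the $\mathcal{KL}$ characterization of asymptotic stability of compact sets (\cite[Theorem~7.12]{GoeSanTee12}) and the semiglobal-practical robustness result (\cite[Lemma~7.20]{GoeSanTee12}) to the restriction $\cH|_{\Gamma_2\cap\bar B_r(\Gamma_1)}$, producing for each $\varepsilon$ an explicit $\delta$. Sequential-compactness proofs of the kind you sketch are classical for continuous systems and can be made to work here too, but as written your plan has a genuine gap.

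The gap is exactly the obstacle you flag but do not resolve: the case $t_i+j_i\to\infty$. Your contradiction step invokes only the \emph{stability} part of hypothesis~(i), never attractivity. But Example~\ref{ex:forremark} shows that mere stability of $\Gamma_1$ relative to $\Gamma_2$, together with hypothesis~(ii), does \emph{not} yield stability of $\Gamma_1$; the attractivity in (i) is essential. Run your argument on that example ($\dot x_1=x_2$, $\dot x_2=0$, $\Gamma_1=\{0\}$, $\Gamma_2=\{x_2=0\}$): with $p_i=(0,1/i)$ the escape time from $B_\varepsilon(\Gamma_1)$ satisfies $t_i\approx i\varepsilon\to\infty$, and the graphical limit of the truncations is the constant solution $x^\star\equiv 0$ on $[0,\infty)$, which lies entirely in $\Gamma_1$ and carries no escape point $(t^\star,j^\star)$. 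No contradiction arises, and no subsequence has bounded escape times, so your parenthetical ``extracted along a further subsequence to a finite limit or handled directly'' is precisely where the argument collapses. A compactness proof can be repaired, but it must exploit attractivity---for instance, shift each truncated arc so that hybrid time zero is its last visit to $\bar B_{\varepsilon/2}(\Gamma_1)$ before $(t_i,j_i)$; the shifted arcs still have range in every $B_{\varepsilon'}(\Gamma_2)$ by your local-stability step, and if their horizons again diverge the graphical limit is a complete solution of $\cH|_{\Gamma_2}$ starting in $\bar B_{\varepsilon/2}(\Gamma_1)\cap\Gamma_2$ that never re-enters $B_{\varepsilon/2}(\Gamma_1)$, contradicting relative attractivity. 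The paper's $\mathcal{KL}$ route sidesteps this case analysis by packaging stability and attractivity into a single bound.
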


\begin{remark}
As argued in Remark~\ref{rem:local_stability}, local stability of
$\Gamma_2$ near $\Gamma_1$ (assumption (ii)) is a necessary condition
in Theorem~\ref{thm:reduction_stability}.  In place of this
assumption, one may use the stronger assumption that $\Gamma_2$ be
stable, which might be easier to check in practice but is not a
necessary condition (see for example system \eqref{eq:forremark} in Example~\ref{ex:forremark}).
There are situations, however, when the local stability property is
essential and emerges quite naturally from the context of the
problem. This occurs, for instance, when solutions far from $\Gamma_1$
but near $\Gamma_2$ have finite escape times. For examples of such
situations,  refer to~\cite{GreMasMag17}
and~\cite{MichielettoIFAC17}.
\end{remark}

\begin{proof}
Hypotheses (i) and (ii) imply that there exists a scalar $r>0$
%and a compact set $K \supset B_r(\Gamma_1)$
such that:
%
% \manfredi{In $\cH_{r,0}$ and $\cH_r$ you need to make sure that flow
%   and jump sets are closed.}
\begin{itemize}
\item[(a)] Set $\Gamma_1$ is globally asymptotically stable for system
  $\mathcal{H}_{r,0} := (C \cap \Gamma_2 \cap \bar B_r(\Gamma_1), F, D
  \cap \Gamma_2 \cap \bar B_r(\Gamma_1),G)$,

\item[(b)] 
Given system $\mathcal{H}_r := \cH|_{\bar B_r(\Gamma_1)}$
for each $\varepsilon>0$,
$\exists \delta>0$ such that all solutions to $\mathcal{H}_r$ satisfy:
$$
|x(0,0)|_{\Gamma_1} \leq \delta \; \Rightarrow \; |x(t,j)|_{\Gamma_2} \leq \varepsilon, \; \forall (t,j) \in \dom(x).
$$

\end{itemize}
Since $\Gamma_1$ is contained in the interior of $\bar B_r(\Gamma_1)$, 
by Lemma~\ref{lem:restrictions_imply_attractivity} to prove stability
of $\Gamma_1$ for ${\mathcal H}$ it suffices to prove stability of
$\Gamma_1$ for system $\mathcal{H}_r$ introduced in (b).
  The rest of the proof follows similar steps to the
proof of stability reported in \cite[Corollary 7.24]{GoeSanTee12}.

From item (a) and due to \cite[Theorem 7.12]{GoeSanTee12}, there exists a
class $\mathcal{KL}$ bound $\beta \in \mathcal{KL}$ and, due to
\cite[Lemma 7.20]{GoeSanTee12} applied with a constant perturbation
function $x \mapsto \rho(x) =\bar \rho$ and with ${\mathcal U}=\real^n$, for each
$\varepsilon >0$ there exists $\bar \rho>0$ such that defining
\begin{equation}
\label{eq:rhobarsets}
\begin{array}{rcl}
C_{\bar \rho,r} &:=& C \cap  \bar B_{\bar \rho}(\Gamma_2)\cap
  \bar B_r(\Gamma_1) \\
  &\subset& 
\{x \in \Re^n:\; (x+  \bar \rho \ball) \cap (C \cap \Gamma_2 \cap  \bar B_r(\Gamma_1)) \neq \emptyset \} \\
D_{\bar \rho,r} &:=& D \cap  \bar B_{\bar \rho}(\Gamma_2)\cap
  \bar B_r(\Gamma_1)  \\
  &\subset& 
\{x \in \Re^n:\; (x+  \bar \rho \ball) \cap (D \cap \Gamma_2 \cap \bar B_r(\Gamma_1)) \neq \emptyset \}
\end{array}
\end{equation}
and introducing system ${\mathcal H}_{\bar \rho,r} := (C_{\bar \rho,r},F ,D_{\bar \rho,r} ,G)$, we have\footnote{Note that 
for a constant perturbation $\rho(x) = \bar \rho$ the inflated flow
and jump sets in \cite[Definition~6.27]{GoeSanTee12} are exactly $\bar
\rho$ inflations of the original ones.}
\begin{equation}
\begin{array}{l}
  |x(t,j)|_{\Gamma_1} \leq \beta( |x(0,0)|_{\Gamma_1}, t+j) +\frac{\varepsilon}{2}, \\
\hspace{3cm} \forall (t,j) \in \dom(x), 
\forall x \in {\mathcal S}_{{\mathcal H}_{\bar \rho,r}}
\end{array}
\label{eq:KLbound}
\end{equation}

Let $\varepsilon >0$ be given. Let $\bar \rho>0$ be such that (\ref{eq:KLbound}) holds. Due to item (b) above, there exists a small enough $\delta >0$ such that $\beta(\delta, 0) \leq \frac{\varepsilon}{2}$ and
\begin{equation}
(x \in {\mathcal S}_{\mathcal{H}_r}, |x(0,0)|_{\Gamma_1} \! \leq  \delta) \Rightarrow |x(t,j)|_{\Gamma_2}\! \leq \bar \rho, \forall (t,j)\! \in\! \dom(x) .
\label{eq:rhobar_sols}
\end{equation}
Then
the solutions considered in (\ref{eq:rhobar_sols}) are also solutions
of ${\mathcal H}_{\bar \rho,r}$ because they remain in $B_{\bar
  \rho}(\Gamma_2)$. Since these are solutions of ${\mathcal H}_{\bar
  \rho,r}$, we may apply (\ref{eq:KLbound}) to get
\begin{equation}
|x(t,j)|_{\Gamma_1} \leq \beta( \delta, 0) +\frac{\varepsilon}{2}\leq 
 \frac{\varepsilon}{2} +\frac{\varepsilon}{2} = \varepsilon, \;\forall (t,j) \in \dom(x) ,
\end{equation}
which completes the proof.
\end{proof}

\begin{example}
\label{ex:forremark}
Assumption (i) in the above theorem cannot be replaced by the weaker
requirement that $\Gamma_1$ be stable relative to $\Gamma_2$.  To
illustrate this fact, consider the linear time-invariant system
\[
\begin{aligned}
& \dot x_1 =x_2 \\
& \dot x_2 =0,
\end{aligned}
\]
with $\Gamma_1 = \{(0,0)\}$ and $\Gamma_2 = \{(x_1,x_2):
x_2=0\}$. Although $\Gamma_1$ is stable relative to $\Gamma_2$ and
$\Gamma_2$ is stable, $\Gamma_1$ is an unstable equilibrium. On the
other hand, consider the system
\[
\begin{aligned}
& \dot x_1 =-x_1 +x_2 \\
& \dot x_2 =0,
\end{aligned}
\]
with the same definitions of $\Gamma_1$ and $\Gamma_2$. Now $\Gamma_1$
is asymptotically stable relative to $\Gamma_2$, and $\Gamma_2$ is
stable. As predicted by Theorem~\ref{thm:reduction_stability},
$\Gamma_1$ is a stable equilibrium. Finally, let $\sigma: \Re \to [0,
  1]$ be a $C^1$ function such that $\sigma(s)=0$ for $|s|\leq 1$ and
$\sigma(s) =1$ for $|s| \geq 2$, and consider the system
\begin{align}
\begin{array}{l}
 \dot x_1 =-x_1 (1-\sigma(x_1))+x_2^2 \\[.1cm]
 \dot x_2 =\sigma(x_1)x_2,
\end{array}
\label{eq:forremark}
\end{align}
with the earlier definitions of $\Gamma_1$ and $\Gamma_2$. One can see
that $\Gamma_1$ is asymptotically stable relative to $\Gamma_2$, and
$\Gamma_2$ is unstable. For the former property, note that the motion
on $\Gamma_2$ is described by $\dot x_1 = -x_1(1-\sigma(x_1))$, a
$C^1$ differential equation which near $\{x_1=0\}$ reduces to $\dot
x_1=-x_1$.  To see that $\Gamma_2$ is an unstable set, note that if
$x_1(0) \geq 2$, then $x_1(t) \geq x_1(0)$ and $\dot x_2 =
x_2$. Namely, solutions move away from $\Gamma_2$. On the other hand,
$\Gamma_2$ is locally stable near $\Gamma_1$, because as long as
$|x_1|\leq 1$, $\dot x_2 = 0$.  By
Theorem~\ref{thm:reduction_stability}, $\Gamma_1$ is a stable
equilibrium.
\end{example}

\begin{theorem}[Reduction theorem for
 attractivity]\label{thm:reduction_attractivity} For a hybrid system
  $\cH$ satisfying the Basic Assumptions, consider two sets $\Gamma_1
  \subset \Gamma_2 \subset \Re^n$, with $\Gamma_1$ compact and
  $\Gamma_2$ closed.  Assume that

\begin{enumerate}[(i)]
\item $\Gamma_1$ is globally asymptotically stable relative to
  $\Gamma_2$,

\item $\Gamma_2$ is globally attractive,
\end{enumerate}
then the basin of attraction of $\Gamma_1$ is the set
\begin{equation}
  \cB := \{x_0
\in \Re^n : \text{all } x \in \Sol(x_0) \text{ are bounded}\}.
\label{eq:Bset}
\end{equation}
In particular, if $\cB$ contains $\Gamma_1$ in its
interior, then $\Gamma_1$ is attractive. If all solutions of $\cH$ are
bounded, then $\Gamma_1$ is globally attractive.
\end{theorem}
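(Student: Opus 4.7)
The plan is to prove the stated characterization by showing that $\cB$ in~\eqref{eq:Bset} coincides with the basin of attraction of $\Gamma_1$. The inclusion ``basin of attraction $\subset \cB$'' is immediate from Definition~\ref{defn:stability_attractivity}, because boundedness of every $x \in \Sol(x_0)$ is part of the definition of the basin. For the nontrivial direction I would fix $x_0 \in \cB$ and a complete $x \in \Sol(x_0)$; $x$ is bounded by hypothesis, and the task is to show $|x(t,j)|_{\Gamma_1} \to 0$ as $t+j \to \infty$.

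The analysis centres on the $\omega$-limit set $\Omega(x)$. By~\cite[Lemma~3.3]{SanGoeTee07}, the very same tool invoked in the proof of Proposition~\ref{prop:detectability}, $\Omega(x)$ is nonempty, compact, weakly invariant (both weakly forward and weakly backward), and $x(t,j) \to \Omega(x)$; hence it suffices to prove $\Omega(x) \subset \Gamma_1$. Global attractivity of $\Gamma_2$ (assumption (ii)) gives $|x(t,j)|_{\Gamma_2} \to 0$, and since $\Gamma_2$ is closed every subsequential limit of $x$ lies in $\Gamma_2$, yielding $\Omega(x) \subset \Gamma_2$. Consequently any solution of $\cH$ remaining in $\Omega(x)$ is in fact a solution of the restriction $\cH|_{\Gamma_2}$.

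Now I would exploit assumption (i): $\Gamma_1$ is globally asymptotically stable for $\cH|_{\Gamma_2}$, which still satisfies the Basic Assumptions because $\Gamma_2$ is closed. Since $\Gamma_1$ is compact, \cite[Theorem~7.12]{GoeSanTee12} furnishes a class-$\mathcal{KL}$ function $\beta$ such that every $y \in \cS_{\cH|_{\Gamma_2}}$ starting in the compact set $\Omega(x)$ satisfies $|y(t,j)|_{\Gamma_1} \leq \beta(|y(0,0)|_{\Gamma_1},\,t+j)$ on $\dom(y)$. Given any $p \in \Omega(x)$ and any $\tau>0$, weak \emph{backward} invariance of $\Omega(x)$ furnishes $q \in \Omega(x)$ and a solution $y \in \Sol(q)$ with $(t_\tau,j_\tau) \in \dom(y)$, $t_\tau + j_\tau \geq \tau$, $y(t_\tau,j_\tau) = p$, and $y(s,k) \in \Omega(x) \subset \Gamma_2$ for every $(s,k) \preceq (t_\tau,j_\tau)$. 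The prefix of $y$ thus qualifies as a solution of $\cH|_{\Gamma_2}$, and setting $R := \sup_{q' \in \Omega(x)} |q'|_{\Gamma_1} < \infty$ (finite by compactness), the KL bound gives
\[
|p|_{\Gamma_1} = |y(t_\tau,j_\tau)|_{\Gamma_1} \leq \beta(R,\, t_\tau+j_\tau) \leq \beta(R,\tau).
\]
Letting $\tau \to \infty$ yields $|p|_{\Gamma_1}=0$, hence $p \in \Gamma_1$, so $\Omega(x) \subset \Gamma_1$ and $|x(t,j)|_{\Gamma_1} \to 0$. The two concluding assertions then follow directly from Definition~\ref{defn:stability_attractivity}: if $\cB$ contains $\Gamma_1$ in its interior then so does the basin and $\Gamma_1$ is attractive; if all solutions are bounded then $\cB = \Re^n$ and $\Gamma_1$ is globally attractive.

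I expect the main technical point to be the rigorous use of weak \emph{backward} invariance of $\Omega(x)$, i.e.\ producing, for each $p \in \Omega(x)$ and each arbitrarily large horizon $\tau$, a solution of $\cH$ that reaches $p$ from inside $\Omega(x)$ after hybrid time of duration at least $\tau$. This is a consequence of the sequential compactness of hybrid solutions~\cite[Theorem~4.4]{GoeTee06} applied to backward time-shifts of $x$, but care is needed with hybrid-time arithmetic and with extracting a graphically convergent subsequence. A secondary check is that $\cH|_{\Gamma_2}$ inherits the Basic Assumptions, so that the KL characterization of \cite[Theorem~7.12]{GoeSanTee12} is available uniformly over the compact initial set $\Omega(x)$.
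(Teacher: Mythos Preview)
Your argument is correct and takes a genuinely different route from the paper's proof. The paper does not pass through $\Omega(x)$; instead it invokes the robustness machinery of \cite[Lemma~7.20]{GoeSanTee12}: from the $\mathcal{KL}$ bound $\beta$ for $\cH|_{\Gamma_2}$ it obtains, for each $\varepsilon>0$, a constant $\bar\rho>0$ such that the inflated system $\cH_{\bar\rho}=(C\cap\bar B_{\bar\rho}(\Gamma_2)\cap\bar B_M(\Gamma_1),F,D\cap\bar B_{\bar\rho}(\Gamma_2)\cap\bar B_M(\Gamma_1),G)$ satisfies a semiglobal practical $\mathcal{KL}$ estimate. Global attractivity of $\Gamma_2$ then supplies $T_1$ after which the bounded solution $x$ lives in $\bar B_{\bar\rho}(\Gamma_2)$, so its tail is a solution of $\cH_{\bar\rho}$; choosing $T_2$ with $\beta(M,T_2)\le\varepsilon/2$ and setting $T=T_1+T_2$ yields~\eqref{eq:conv_x}.

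Your $\omega$-limit-set argument is closer in spirit to the continuous-time proofs in~\cite{SeiFlo95,ElHMag13}: it is more geometric, avoids Lemma~7.20 entirely, and uses weak \emph{backward} invariance of $\Omega(x)$ (from \cite[Lemma~3.3]{SanGoeTee07} / \cite[Proposition~6.21]{GoeSanTee12}) together with the exact $\mathcal{KL}$ bound on $\Gamma_2$ itself rather than on an inflation. The paper's approach, by contrast, is more quantitative---it produces an explicit hybrid time $T$ after which $|x(t,j)|_{\Gamma_1}\le\varepsilon$---and it reuses the same perturbation lemma that drives the stability proof of Theorem~\ref{thm:reduction_stability}, giving the two proofs a uniform flavour. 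Both are valid; the one nontrivial point you correctly flag is that the prefix of the backward-invariance solution $y$, staying in $\Omega(x)\subset\Gamma_2$, is indeed a solution of $\cH|_{\Gamma_2}$ (since $y(s,k)\in C$ and $y(s,k)\in\Gamma_2$ give $y(s,k)\in C\cap\Gamma_2$, and similarly for $D$), so the $\mathcal{KL}$ bound from \cite[Theorem~7.12]{GoeSanTee12} applies.
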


\begin{proof}
By definition, any bounded non complete solution belongs to the basin
of attraction of $\Gamma_1$.  The proof amounts then to showing that
any bounded and complete solution $x \in \Sol$ converges to
$\Gamma_1$, so that all points in $\cB$ defined in \eqref{eq:Bset} are
contained in its basin of attraction.  Conversely, any solution in the
basin of attraction of $\Gamma_1$ is bounded by definition, so it
belongs to $\cB$.
Hypothesis (i) corresponds to the following fact:
\begin{itemize}
\item[(a)] Set $\Gamma_1$ is globally asymptotically stable for system 
$\cH|_{\Gamma_2} := (C \cap \Gamma_2, F, D \cap \Gamma_2,G)$.
\end{itemize}

The rest of the proof follows similar steps to the proof of
attractivity reported in \cite[Corollary~7.24]{GoeSanTee12}.  Given
any bounded and complete solution $x\in \Sol$, define $M :=
\max\nolimits\limits_{(t,j) \in \dom(x)} |x(t,j)|_{\Gamma_1}$.
Convergence of $x$ to $\Gamma_1$ is established by showing that for
each $\varepsilon$, there exists $T\geq 0$ such that
\begin{equation}
|x(t,j)|_{\Gamma_1} \leq \varepsilon, \quad \forall (t,j) \in \dom (x): t+j \geq T.
\label{eq:conv_x}
\end{equation}

From item (a) above, and applying \cite[Theorem~7.12]{GoeSanTee12},
there exists a uniform class $\mathcal{KL}$ bound $\beta \in
\mathcal{KL}$ on the solutions to system $\cH|_{\Gamma_2}$.  Fix an
arbitrary $\varepsilon>0$.  To establish (\ref{eq:conv_x}), due to
\cite[Lemma~7.20]{GoeSanTee12} applied to $\cH|_{\Gamma_2}$ with
$\mathcal{U}= \real^n$, with a constant perturbation function $x
\mapsto \rho(x) =\bar \rho$ and with the compact set $K = \bar
B_M(\Gamma_1)$ (to be used in the definition of semiglobal practical
$\mathcal{KL}$ asymptotic stability of \cite[Definition~7.18]{GoeSanTee12}),
there exists a small enough $\bar \rho>0$ such that
defining~\footnote{Note that the set inclusions in
  \eqref{eq:rhobarsets_attr} always hold for a small enough $\bar
  \rho$. Indeed, even in the peculiar case when $C \cap \Gamma_2$ is
  empty, since $C$ and $\Gamma_2$ are closed, it is possible to pick
  $\bar \rho$ small enough so that $C \cap \bar B_{\bar
    \rho}(\Gamma_2)$ is empty too, and then the inclusions
  \eqref{eq:rhobarsets_attr} hold because both sides are empty
  sets. Similar arguments apply when $D \cap \Gamma_2$ is empty. }
\begin{equation}
\label{eq:rhobarsets_attr}
\begin{array}{rcl}
C_{\bar \rho} \!\!&:=&\!\! \bar
  B_M(\Gamma_1) \; \cap\; C \cap \bar B_{\bar \rho}(\Gamma_2) \\
  &\subset& \!\!  
 \bar B_M(\Gamma_1) \cap
 \{x \in \Re^n:\; (x+  \bar \rho \ball) \cap ((C \cap \Gamma_2 ) \neq \emptyset \}      \\
D_{\bar \rho}\!\! &:=& \!\! \bar
  B_M(\Gamma_1) \; \cap\; D \cap \bar B_{\bar \rho}(\Gamma_2) \\
  &\subset&   \!\!
  \bar B_M(\Gamma_1) \cap
\{x \in \Re^n:\; (x+  \bar \rho \ball) \cap ((D \cap \Gamma_2 ) \neq \emptyset \}  
\end{array}
\end{equation}
and introducing system 
${\mathcal H}_{\bar \rho} := (C_{\bar \rho},F ,D_{\bar \rho} ,G)$,
we have 
\begin{align}
\label{eq:KLbound_attr}
|\bar x(t,j)|_{\Gamma_1} &\leq \beta( |\bar x(0,0)|_{\Gamma_1}, t+j) +\frac{\varepsilon}{2}, \\
\nonumber
    &\leq  \beta( M, t+j) +\frac{\varepsilon}{2}, \;\; \forall (t,j) \in \dom(\bar x), \forall \bar x \in {\mathcal S}_{{\mathcal H}_{\bar \rho}}.
\end{align}
Define now $T_2>0$ satisfying $\beta( M,T_2 ) \leq \frac{\varepsilon}{2}$, and obtain:
\begin{equation}
\label{eq:conv_bound}
\bar x \in {\mathcal S}_{{\mathcal H}_{\bar \rho}} \quad \Rightarrow \quad 
|\bar x(t,j)|_{\Gamma_1} \leq \varepsilon, \forall (t,j) \in \dom(\bar x): t+j \geq T_2.
\end{equation}
Moreover, from hypothesis (ii), there exists $T_1>0$ such that
$|x(t,j)|_{\Gamma_2} \leq \bar \rho$ for all $(t,j) \in \dom (x)$ satisfying $t+j \geq T_1$. As a consequence, the tail of solution $x$ (after $t+j \geq T_1$) is a solution to ${\mathcal H}_{\bar \rho}$. By virtue of (\ref{eq:conv_bound}), equation (\ref{eq:conv_x}) is established with $T= T_1+T_2$ and the proof is completed.
\end{proof}

\begin{example} \label{ex:circles}
Consider a hybrid system with continuous states $x=(x_1,x_2,x_3)
\in \Re^3$ and a discrete state $q \in \{1,-1\}$. The %continuous
dynamics are defined as
\[
\begin{aligned}
& \dot x_1 = q x_2      &\quad&x_1^+= x_1\\
& \dot x_2 = -q x_1     &&x_2^+=x_2 \\
& \dot x_3 = x_1^2 -x_3 &&x_3^+=x_3/2 \\
& \dot q =0,            &&q^+ =-q , 
\end{aligned}
\]
and the flow and jump sets are selected as closed sets ensuring that along
flowing solutions
we have $x_1(t,j) > 0 \Rightarrow q(t,j) = 1$ and 
$x_1(t,j) < 0 \Rightarrow q(t,j) = -1$. To this end, when the
solution hits the set $\{x_1=0\}$, the discrete state is toggled, $q^+
= -q$, and the state $x_3$ is halved, $x_3^+ = x_3/2$.  
In particular, we select
\[
\begin{aligned}
& C= \{(x,q): x_1 \geq 0, q=1\} \cup \{(x,q): x_1
  \leq 0, q =-1\}, \\
& D=\{(x,q): x_1 =0, q=1\} \cup \{(x,q): x_1 = 0, q =-1\}, \\
\end{aligned}
\]
For any flowing solution starting in $C$, the states $(x_1,x_2)$
describe an arc of a circle centered at $(x_1,x_2)=(0,0)$. The
direction of motion 
is clockwise on the half-space $x_1> 0$, and counter-clockwise on $x_1
<0$. Each solution reaches the set $\{(x,q): x_1=0\}$ in finite
time. On this set, the only complete solutions are Zeno, namely, the
discrete state $q$ persistently toggles. The set
\[
\Gamma_2:=\{(x,q): x_1=0\}
\]
is, therefore, globally attractive for $\cH$. It is, however,
unstable, as solutions of the $(x_1,x_2)$-subsystem starting
arbitrarily close to $\Gamma_2$ with $x_2>0$ evolve along arcs of
circles that move away from $\Gamma_2$.  On $\Gamma_2$, the flow is
described by the differential equation $\dot x_3 = -x_3$, while the
jumps are described by the difference equation $x_3^+ = x_3 /2$. Thus
the $x_2$ axis
\[
\Gamma_1 :=\{(x,q) \in \Gamma_2 : x_3=0\},
\]
%
%
%
%\mygreen{(corresponding to the $x_2$ coordinate axis as well represented in Figure~\ref{fig:ce})}
is globally asymptotically stable relative to $\Gamma_2$.  Since the states
$(x_1,x_2)$ are bounded, so is the $x_3$ state.  By
Theorem~\ref{thm:reduction_attractivity}, $\Gamma_1$ is globally
attractive for $\cH$. On the other hand, $\Gamma_1$ is unstable for $\cH$.
\end{example}

\begin{example}\label{ex:attractivity:counterexample}
In
Theorem~\ref{thm:reduction_attractivity}, one may not replace
assumption (i) by the weaker requirement that $\Gamma_1$ be attractive
relative to $\Gamma_2$. We illustrate this fact with an example taken
from~\cite{ElH11}.  Consider the smooth differential equation
\[
\begin{aligned}
&\dot{x}_1=(x_2^2+x_3^2)(-x_2)\\
&\dot{x}_2=(x_2^2+x_3^2)(x_1)\\
&\dot{x}_3=-x_3^3,
\end{aligned}
\]
and the sets $\Gamma_1=\{(x_1,x_2,x_3):x_2=x_3=0\}$ and
$\Gamma_2=\{(x_1,x_2,x_3):x_3=0\}$. One can see that $\Gamma_2$ is
globally asymptotically stable, and the motion on $\Gamma_2$ is
described by the system
\[
\begin{aligned}
&\dot{x}_1=-x_2 (x_2^2)\\ &\dot{x}_2=x_1 (x_2^2).
\end{aligned}
\]
On $\Gamma_1\subset \Gamma_2$, every point is an equilibrium.  Phase
curves on $\Gamma_2$ off of $\Gamma_1$ are concentric semicircles
$\{x_1^2+x_2^2=c\}$, and therefore $\Gamma_1$ is a global, but
unstable, attractor relative to $\Gamma_2$. As shown in
Figure~\ref{fig:ce}, for initial conditions not in $\Gamma_2$ the
trajectories are bounded and their positive limit set is a circle
inside $\Gamma_2$ which intersects $\Gamma_1$ at equilibrium
points. Thus $\Gamma_1$ is not attractive.
\begin{figure}[htb]
\psfrag{x1}{{\small $x_1$}} \psfrag{x2}{{\small $x_2$}}
\psfrag{x3}{{\small $x_3$}} \psfrag{G}{{\small $\Gamma_1$}}
\psfrag{O}{{\small $\Gamma_2$}}
\centerline{\includegraphics[width=.9\columnwidth]{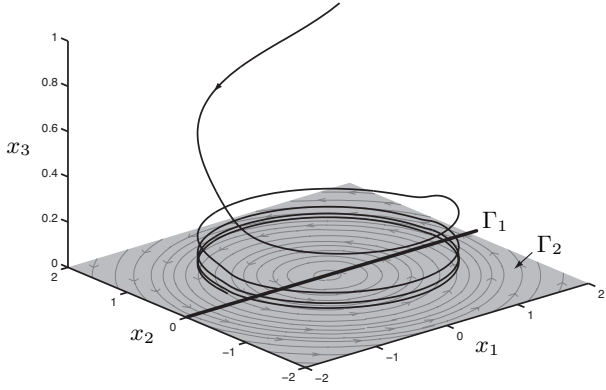}}
\caption{Example~\ref{ex:attractivity:counterexample}: $\Gamma_1$ is
  globally attractive relative to $\Gamma_2$, $\Gamma_2$ is globally
  asymptotically stable, and yet $\Gamma_1$ is not attractive.}
\label{fig:ce}
\end{figure}
\end{example}

\begin{theorem}[Reduction theorem for asymptotic stability]
\label{thm:reduction_asy_stability} 
For a hybrid system $\cH$ satisfying the Basic Assumptions, consider
two sets $\Gamma_1 \subset \Gamma_2 \subset \Re^n$, with $\Gamma_1$
compact and $\Gamma_2$ closed.  Then $\Gamma_1$ is asymptotically
stable if, and only if
\begin{enumerate}[(i)]
\item $\Gamma_1$ is asymptotically stable relative to $\Gamma_2$,

\item $\Gamma_2$ is locally stable near $\Gamma_1$,

\item $\Gamma_2$ is locally attractive near $\Gamma_1$.
\end{enumerate}
Moreover, $\Gamma_1$ is globally asymptotically stable for $\cH$ if,
and only if,
\begin{enumerate}[(i')]
\item $\Gamma_1$ is globally asymptotically stable relative to
  $\Gamma_2$,

\item $\Gamma_2$ is locally stable near $\Gamma_1$,

\item $\Gamma_2$ is globally attractive,

\item all solutions of $\cH$ are bounded.
\end{enumerate}
\end{theorem}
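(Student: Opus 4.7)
The plan is to deduce both parts of the theorem from Theorems~\ref{thm:reduction_stability} and~\ref{thm:reduction_attractivity} together with Lemmas~\ref{lem:restrictions_inherit_stability} and~\ref{lem:restrictions_imply_attractivity}, and elementary observations.

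\emph{Necessity.} Items (i) and (i') would follow by applying Lemma~\ref{lem:restrictions_inherit_stability} separately to stability and to (global) attractivity. For (ii) and (ii'), given $\eps>0$, the $\delta$ obtained from the stability definition of $\Gamma_1$ also witnesses local stability of $\Gamma_2$ near $\Gamma_1$ for any $r>0$, since $\Gamma_1 \subset \Gamma_2$ entails $B_\eps(\Gamma_1) \subset B_\eps(\Gamma_2)$. Items (iii) and (iii') follow analogously from attractivity, respectively global attractivity, of $\Gamma_1$: the inequality $|x|_{\Gamma_2} \leq |x|_{\Gamma_1}$ converts convergence to $\Gamma_1$ into convergence to $\Gamma_2$. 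Item (iv') is built into the definition of global asymptotic stability of a compact set, which requires every solution to be bounded.

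\emph{Sufficiency, global version.} Theorem~\ref{thm:reduction_stability} applied to (i')--(ii') yields stability. Theorem~\ref{thm:reduction_attractivity} applied to (i') and (iii') then shows that the basin of attraction of $\Gamma_1$ equals the set $\cB$ from~\eqref{eq:Bset}, which by (iv') coincides with $\Re^n$.

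\emph{Sufficiency, local version.} Theorem~\ref{thm:reduction_stability} applied to (i)--(ii) supplies stability of $\Gamma_1$. For attractivity, I would pick $r>0$ small enough that (a) $\bar B_r(\Gamma_1)$ lies in the basin of attraction of $\Gamma_2$ for $\cH$, available from (iii), and (b) $\bar B_r(\Gamma_1) \cap \Gamma_2$ lies in the basin of attraction of $\Gamma_1$ for $\cH|_{\Gamma_2}$, available from (i). I would then apply Theorem~\ref{thm:reduction_attractivity} to the restricted system $\tilde \cH := \cH|_{\bar B_r(\Gamma_1)}$ with intermediate set $\Gamma_2 \cap \bar B_r(\Gamma_1)$. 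Its hypothesis~(i) holds because every solution of $\cH|_{\Gamma_2 \cap \bar B_r(\Gamma_1)}$ is a solution of $\cH|_{\Gamma_2}$ starting in the basin of (b), and the stability clause of (i) transfers verbatim to the smaller state space; its hypothesis~(ii) holds because every solution of $\tilde \cH$ lies in the compact set $\bar B_r(\Gamma_1)$---hence is bounded---and, if complete, converges to $\Gamma_2$ by (a), with a standard compactness argument promoting this to convergence to $\Gamma_2 \cap \bar B_r(\Gamma_1)$. Theorem~\ref{thm:reduction_attractivity} then delivers that the basin of $\Gamma_1$ for $\tilde \cH$ equals $\Re^n$, because all solutions of $\tilde \cH$ are bounded. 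Finally, Lemma~\ref{lem:restrictions_imply_attractivity}(b) applied with $\bar B_r(\Gamma_1)$ in place of the lemma's $\Gamma_2$ (noting that $\Gamma_1 \subset B_r(\Gamma_1) = \interior \bar B_r(\Gamma_1)$) transfers attractivity from $\tilde \cH$ back to $\cH$.

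The main obstacle will be the verification that the \emph{local} relative asymptotic stability in (i) upgrades to \emph{global} relative asymptotic stability of $\Gamma_1$ for the restricted system $\tilde \cH$. The subtlety is that solutions of $\cH|_{\Gamma_2 \cap \bar B_r(\Gamma_1)}$ form a proper subclass of $\cH|_{\Gamma_2}$-solutions---namely those that happen to stay in $\bar B_r(\Gamma_1)$---so $r$ must be chosen small enough to fit simultaneously inside the two basins in (a) and (b), ensuring that these confined solutions inherit the stability and convergence provided by assumption (i) without any risk of exiting $\bar B_r(\Gamma_1)$.
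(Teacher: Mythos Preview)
Your proposal is correct and follows essentially the same route as the paper's proof: necessity via Lemma~\ref{lem:restrictions_inherit_stability} and the inclusion $B_\eps(\Gamma_1)\subset B_\eps(\Gamma_2)$; sufficiency by combining Theorem~\ref{thm:reduction_stability} for stability with Theorem~\ref{thm:reduction_attractivity} applied to the restriction $\cH|_{\bar B_r(\Gamma_1)}$ (whose compact data force boundedness), and then transferring attractivity back to $\cH$ via Lemma~\ref{lem:restrictions_imply_attractivity}(b). The subtlety you flag at the end---upgrading local relative asymptotic stability to global relative asymptotic stability for the restricted system by choosing $r$ inside both basins---is precisely the step the paper dispatches in one line, so you have identified the only nontrivial point and handled it correctly.
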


\begin{proof}
% %
% %
$(\Leftarrow)$ We begin by proving the local version of the theorem.

By assumption (i), there exists $r>0$ such that $\Gamma_1$ is globally
asymptotically stable relative to the set $\Gamma_{2,r} :=\Gamma_2
\cap \bar B_r(\Gamma_1)$ for $\cH$. By
Lemma~\ref{lem:restrictions_inherit_stability}, the same property
holds for the restriction $\cH_r:=\cH|_{\bar B_r(\Gamma_1)}$.

By assumption (iii) and by making, if necessary, $r$ smaller,
$\Gamma_{2,r}$ is globally attractive for $\cH_r$.

By Theorem~\ref{thm:reduction_attractivity}, the basin of attraction
of $\Gamma_1$ for $\cH_r$ is the set of initial conditions from which
solutions of $\cH_r$ are bounded. Since the flow and jump sets of
$\cH_r$ are compact, all solutions of $\cH_r$ are bounded, and thus
$\Gamma_1$ is attractive for $\cH_r$.

Assumptions (i) and (ii) and Theorem~\ref{thm:reduction_stability}
imply that $\Gamma_1$ is stable for $\cH$. Since $\Gamma_1$ is
  contained in the interior of $\bar B_r(\Gamma_1)$, by
Lemma~\ref{lem:restrictions_imply_attractivity} the attractivity of
$\Gamma_1$ for $\cH_r$ implies the attractivity of $\Gamma_1$ for
$\cH$. Thus $\Gamma_1$ is asymptotically stable for $\cH$.

For the global version, it suffices to notice that assumptions (i'),
(iii'), and (iv') imply, by Theorem~\ref{thm:reduction_attractivity},
that $\Gamma_1$ is globally attractive for $\cH$.

$(\Rightarrow)$ Suppose that $\Gamma_1$ is asymptotically
stable. By Lemma~\ref{lem:restrictions_inherit_stability}, $\Gamma_1$
is asymptotically stable for 
$\cH|_{\Gamma_2}$, and thus condition (i) holds.
By~\cite[Proposition~6.4]{GoeTee06}, the basin of attraction of
$\Gamma_1$ is an open set $\cB$ containing $\Gamma_1$, each solution
$x \in \Sol(\cB)$ is bounded and, if it is complete, it converges to
$\Gamma_1$. Since $\Gamma_1 \subset \Gamma_2$, such a solution
converges to $\Gamma_2$ as well. Thus the basin of attraction of
$\Gamma_2$ contains $\cB$, proving that $\Gamma_2$ is locally
attractive near $\Gamma_1$ and condition (iii) holds. To prove that
$\Gamma_2$ is locally stable near $\Gamma_1$, let $r>0$ and $\eps>0$
be arbitrary. Since $\Gamma_1$ is stable, there exists $\delta>0$ such
that each $x \in \Sol(B_\delta(\Gamma_1))$ remains in
$B_\eps(\Gamma_1)$ for all hybrid times in its hybrid time
domain. Since $\Gamma_1 \subset \Gamma_2$, $B_\eps(\Gamma_1) \subset
B_\eps(\Gamma_2)$. Thus each $x \in \Sol(B_\delta(\Gamma_1))$ remains
in $B_\eps(\Gamma_2)$ for all hybrid times in its hybrid time
domain. In particular, it also does so for all the hybrid times for
which it remains in $B_r(\Gamma_1)$. This proves that condition (ii)
holds.

Suppose that $\Gamma_1$ is globally asymptotically stable. The proof
that conditions (i'), (ii'), (iii') hold is a straightforward
adaptation of the arguments presented above. Since $\Gamma_1$ is
globally attractive, its basin of attraction is $\Re^n$. Since
$\Gamma_1$ is compact, by definition all solutions originating in its
basin of attraction are bounded. Thus condition (iv') holds.
\end{proof}

Theorems~\ref{thm:reduction_stability}
and~\ref{thm:reduction_asy_stability} generalize to the hybrid setting
analogous results for continuous systems in~\cite{SeiFlo95,ElHMag13,Son89b}.
The following corollary is of particular interest.

\begin{corollary}\label{cor:asy_stability}
For a hybrid system $\cH$ satisfying the Basic Assumptions, consider
two sets $\Gamma_1 \subset \Gamma_2 \subset \Re^n$, with $\Gamma_1$
compact and $\Gamma_2$ closed. If
\begin{enumerate}[(i)]
\item $\Gamma_1$ is asymptotically stable relative to $\Gamma_2$,

\item $\Gamma_2$ is asymptotically stable,

\end{enumerate}
then $\Gamma_1$ is asymptotically stable.  Moreover, if
\begin{enumerate}[(i')]
\item $\Gamma_1$ is globally asymptotically stable relative to $\Gamma_2$,

\item $\Gamma_2$ is globally asymptotically stable,

\end{enumerate}
then $\Gamma_1$ is asymptotically stable with basin of attraction
given by the set of initial conditions from which all solutions are
bounded. In particular, if all solutions are bounded, then $\Gamma_1$
is globally asymptotically stable.
\end{corollary}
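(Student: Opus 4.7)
The plan is to deduce the corollary directly from Theorems~\ref{thm:reduction_stability}, \ref{thm:reduction_attractivity}, and~\ref{thm:reduction_asy_stability}, using the observation that asymptotic stability of $\Gamma_2$ (local or global) automatically supplies the local-stability-near-$\Gamma_1$ and local/global-attractivity-near-$\Gamma_1$ hypotheses that appear in those statements. In other words, the corollary is essentially a ``user-friendly'' repackaging of the main reduction theorems, and the entire task is to verify the hypotheses mechanically.

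For the local version, I would verify conditions (i), (ii), (iii) of Theorem~\ref{thm:reduction_asy_stability}. Condition (i) is precisely assumption (i) of the corollary. The stability part of assumption (ii) yields local stability of $\Gamma_2$ near $\Gamma_1$ as recorded in Remark~\ref{rem:local_stability}, giving condition (ii) of the theorem. For condition (iii), the attractivity part of assumption (ii) means that the basin of attraction of $\Gamma_2$ is an open set containing $\Gamma_2$; since $\Gamma_1 \subset \Gamma_2$ is compact, a standard compactness argument yields $r>0$ such that $B_r(\Gamma_1)$ lies inside this basin, which is exactly local attractivity of $\Gamma_2$ near $\Gamma_1$. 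Theorem~\ref{thm:reduction_asy_stability} then delivers the asymptotic stability of $\Gamma_1$.

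For the global version, the characterization of the basin of attraction of $\Gamma_1$ as the set $\cB$ of initial conditions producing only bounded solutions follows immediately from Theorem~\ref{thm:reduction_attractivity}, since global asymptotic stability of $\Gamma_2$ entails its global attractivity and hence both of that theorem's hypotheses are met. Stability of $\Gamma_1$ is supplied by Theorem~\ref{thm:reduction_stability}, invoked with assumption (i') (which specializes to (i)) and local stability of $\Gamma_2$ near $\Gamma_1$ (a consequence of stability of $\Gamma_2$ via Remark~\ref{rem:local_stability}). Combining the two yields asymptotic stability of $\Gamma_1$ with basin of attraction $\cB$; when every solution of $\cH$ is bounded, $\cB = \Re^n$ and global asymptotic stability follows.

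Since the heavy analytical machinery has already been deployed in the preceding theorems, there is no real obstacle. The only subtle point worth stating explicitly is the appeal to compactness of $\Gamma_1$ when extracting the uniform $r>0$ from the open basin of attraction of $\Gamma_2$; otherwise the argument reduces to bookkeeping.
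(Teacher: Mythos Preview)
Your proposal is correct and follows essentially the same route as the paper: verify that asymptotic stability of $\Gamma_2$ implies the local-stability-near-$\Gamma_1$ and local-attractivity-near-$\Gamma_1$ hypotheses, then invoke the reduction theorems. The paper spells out the compactness argument for local stability near $\Gamma_1$ explicitly rather than citing Remark~\ref{rem:local_stability}, and for the global version it simply says ``an analogous argument holds,'' whereas you (correctly and more carefully) separate out Theorem~\ref{thm:reduction_attractivity} for the basin characterization and Theorem~\ref{thm:reduction_stability} for stability---but the underlying reasoning is the same.
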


\begin{proof}
If $\Gamma_2$ is asymptotically stable then $\Gamma_2$ is locally
attractive near $\Gamma_1$. Moreover, for each $\eps>0$ there exists
an open set $U$ containing $\Gamma_2$ such that each $x \in \Sol(U)$
remains in $B_\eps(\Gamma_2)$ for all hybrid times in its hybrid time
domain. Since $\Gamma_1 \subset \Gamma_2$, $\Gamma_1$ is contained in
$U$. Since $\Gamma_1$ is compact, there exists $\delta>0$ such that
$B_\delta(\Gamma_1) \subset U$. Thus each solution $x \in
\Sol(B_\delta(\Gamma_1))$ remains in $B_\eps(\Gamma_2)$ for all hybrid
times in its hybrid time domain, implying that $\Gamma_2$ is locally
stable near $\Gamma_1$. By Theorem~\ref{thm:reduction_asy_stability},
$\Gamma_1$ is asymptotically stable.  An analogous argument holds for
the global version of the corollary.
\end{proof}

If in
Theorems~\ref{thm:reduction_stability},~\ref{thm:reduction_attractivity},
and~\ref{thm:reduction_asy_stability} one replaces $\Re^n$ by a closed
subset $\cX$ of $\Re^n$, then the conclusions of the theorems hold
relative to $\cX$, for one can apply the theorems to the restriction
$\cH|_{\cX}$. This allows one to apply the theorems inductively to
finite sequences of nested subsets $\Gamma_1 \subset \cdots \subset
\Gamma_l$ to solve the recursive reduction problem.

\begin{theorem}[Recursive reduction theorem for asymptotic 
stability] \label{thm:recursive_reduction} For a hybrid system $\cH$
  satisfying the Basic Assumptions, consider $l$ sets $\Gamma_1
  \subset \cdots \subset \Gamma_l \subset \Gamma_{l+1}:=\Re^n$, with
  $\Gamma_1$ compact and all $\Gamma_i$ closed. If
\begin{enumerate}[(i)]
\item $\Gamma_i$ is asymptotically stable relative to $\Gamma_{i+1}$,
  $i=1,\ldots,l$, 
\end{enumerate}
then $\Gamma_1$ is asymptotically stable for $\cH$. On the other hand, if 
\begin{enumerate}[(i')]

\item $\Gamma_i$ is globally asymptotically stable relative to $\Gamma_{i+1}$,
  $i=1,\ldots,l$,

\item all $x \in \Sol$ are bounded,
\end{enumerate}
then $\Gamma_1$ is globally asymptotically stable for $\cH$.
\end{theorem}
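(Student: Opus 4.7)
The plan is to prove both statements by induction, using Corollary~\ref{cor:asy_stability} as the engine of the inductive step. In fact, I would establish the stronger intermediate claim that $\Gamma_1$ is asymptotically stable (respectively, globally asymptotically stable) relative to $\Gamma_j$ for every $j\in\{2,\ldots,l+1\}$; setting $j=l+1$ then yields the conclusion, since $\Gamma_{l+1}=\Re^n$ and ``relative to $\Re^n$'' means stability for $\cH$ itself.

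For the local version, the base case $j=2$ is precisely hypothesis (i) at $i=1$. In the inductive step from $j$ to $j+1$, I would apply Corollary~\ref{cor:asy_stability} to the restricted hybrid system $\cH|_{\Gamma_{j+1}}$ with the closed sets $\Gamma_1\subset \Gamma_j$. The restriction $\cH|_{\Gamma_{j+1}}$ satisfies the Basic Assumptions because $\Gamma_{j+1}$ is closed, and $\Gamma_1\subset\Gamma_j\subset\Gamma_{j+1}$ with $\Gamma_1$ compact. The first hypothesis of the corollary, namely that $\Gamma_1$ is asymptotically stable relative to $\Gamma_j$ \emph{within $\cH|_{\Gamma_{j+1}}$}, follows from the inductive hypothesis together with Lemma~\ref{lem:restrictions_inherit_stability}. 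The second hypothesis, namely that $\Gamma_j$ is asymptotically stable for $\cH|_{\Gamma_{j+1}}$, is exactly hypothesis (i) of the theorem at $i=j$. The corollary thus delivers that $\Gamma_1$ is asymptotically stable for $\cH|_{\Gamma_{j+1}}$, i.e.\ relative to $\Gamma_{j+1}$, completing the induction.

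The global version is handled analogously, now establishing that $\Gamma_1$ is globally asymptotically stable relative to $\Gamma_j$ for each $j$. The base case matches hypothesis (i') at $i=1$. For the inductive step, the global half of Corollary~\ref{cor:asy_stability} applied to $\cH|_{\Gamma_{j+1}}$ ensures that $\Gamma_1$ is asymptotically stable relative to $\Gamma_{j+1}$ with basin of attraction equal to the set of points of $\Gamma_{j+1}$ from which all solutions of $\cH|_{\Gamma_{j+1}}$ are bounded. Since every solution of $\cH|_{\Gamma_{j+1}}$ is also a solution of $\cH$, hypothesis (ii') forces this basin to be all of $\Gamma_{j+1}$, upgrading the conclusion to global asymptotic stability relative to $\Gamma_{j+1}$.

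The whole argument is essentially bookkeeping around the definitional equivalence between ``asymptotically stable for $\cH|_{\Gamma}$'' and ``asymptotically stable relative to $\Gamma$,'' together with the observation that closedness of each $\Gamma_j$ guarantees the Basic Assumptions pass to the restrictions. Consequently I do not anticipate a genuine obstacle; the only point requiring care is to verify at each recursion level that hypothesis (ii') propagates to the restricted system, which it does trivially because boundedness of solutions is inherited when passing from $\cH$ to any restriction $\cH|_{\Gamma_{j+1}}$.
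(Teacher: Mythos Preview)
Your proposal is correct and matches the approach the paper indicates: the paper omits the proof, pointing to~\cite[Proposition~14]{ElHMag13}, but explicitly notes just before the theorem that the reduction theorems applied to the restrictions $\cH|_{\Gamma_{j+1}}$ yield the result by induction, which is precisely what you do via Corollary~\ref{cor:asy_stability}. One minor remark: your invocation of Lemma~\ref{lem:restrictions_inherit_stability} in the inductive step is superfluous, since $\Gamma_j\subset\Gamma_{j+1}$ gives $(\cH|_{\Gamma_{j+1}})|_{\Gamma_j}=\cH|_{\Gamma_j}$ directly, so the inductive hypothesis applies verbatim.
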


Analogous statements hold, {\em mutatis mutandis}, for the properties
of stability and attractivity
(see~\cite[Proposition~14]{ElHMag13}). The proof of the theorem above
is contained in that of~\cite[Proposition~14]{ElHMag13} and is
therefore omitted.

%%%%%%%%%%%%%%%%%%%%%%%%%%%%%%%%%%%%%%%%%%%%%%%%%%%%%%%%%%%%%%%%%%%%%%%%%%%%%%%%%%%%%%%%%%%%%%%%%%%%%%%%%%%%%%%%%%
 %%%%%%%%%%%%%%%%%%%%%%%%%%%%%%%%%%%%%%%%%%%%%%%%%%%%%%%%%%%%%%%%%%%%%%%%%%%%%%%%%%%%%%%%%%%%%%%%%%%%%%%%%%%%%%%%%%%%%%%%%%%%%%%%%%%%%%%%%%%%%%%%%%%%%%%%%%%%%%%%%%%%%%%%%%%%%%%%%%%%%%%%%%%%%%%%%%%%%%%%%%%%%%%%%%%%%%%%%%%%%%%%%%%%
 %%%%%%%%%%%%%%%%%%%%%%%%%%%%%%%%%%%%%%%%%%%%%%%%%%%%%%%%%%%%%%%%%%%%%%%%%%%%%%%%%%%%%%%%%%%%%%%%%%%%%%%%%%%%%%%%%% 
\section{Adaptive hybrid observer for uncertain internal models}
\label{sec:example}

Consider a LTI system described by equations of the form
\begin{subequations}\label{eq:exosystem_nominal}
\begin{align}
\dot \chi &= \left[\begin{array}{cc}  0 & -\omega \\ \omega & 0 \end{array} \right]\chi := S \chi, \\
y &= \left[ \begin{array}{cc} 1 & 0 \end{array} \right]\chi := H \chi,
\end{align}
\end{subequations}
\noindent with $\omega \in \mathbb{R}$ not precisely known, for which however lower and upper
bounds are assumed to be available, namely $\omega_m < \omega < \omega_M$, 
$\omega_m, \omega_M \in \mathbb{R}_{+}$. Note that (\ref{eq:exosystem_nominal}) can be considered a hybrid
system with empty jump set and jump map. Suppose in addition
that the norm of the initial condition $\chi(0,0)$ is upper and lower
bounded, namely $\chi_{m} \leq |\chi(0,0)| \leq \chi_{M}$,
for some known positive constants $\chi_{m}$ and $\chi_{M}$. By the nature of the
dynamics in (\ref{eq:exosystem_nominal}), the bounds above imply the existence
of a compact set $\mathcal{W} := \{\chi \in \mathbb{R}^2 : |\chi| \in [\chi_m, \chi_M]\}$ that is 
strongly forward invariant for (\ref{eq:exosystem_nominal}) and where solutions to (\ref{eq:exosystem_nominal})
are constrained to evolve.

The objective of this section consists in 
estimating the period of oscillation, namely $2\pi/\omega$ with $\omega$ unknown, and in (asymptotically) 
reconstructing the
state of the system (\ref{eq:exosystem_nominal}) via the measured output $y$. It is shown that
this task can be reformulated in terms of the results discussed in the previous sections.
Towards this end, let 
\begin{equation}\label{eq:hyb_estimator}
\left\{ \!\!\! \begin{array}{ccl}
\dot{\hat{\chi}} \!\!\! &=& \!\!\! \hat{S}(T)\hat{\chi}+\hat{L}(T)(y-H\hat{\chi}), \\ 
\dot q  \!\!\! &=& \!\!\! 0, \\
\dot T   \!\!\! &=& \!\!\!  0, \\
\dot \tau   \!\!\! &=& \!\!\!  1,
\end{array}
\right. \!\!
%%%%%%%%%%%%%%%%%%
\left\{ \!\!\!  \begin{array}{ccl}
\hat{\chi}^{+}  \!\!\!\! &=& \!\!\! \hat{\chi}, \\
q^{+}  \!\!\!\! &=& \!\!\! {\rm sign}(y), \\
T^{+}  \!\!\!\! &=& \!\!\! \lambda T + (1-\lambda)2\tau, \\
\tau^{+}  \!\!\!\! &=& \!\!\! 0,
\end{array} \right.
\end{equation}
\noindent with $\lambda \in [0,1)$, denote the \emph{flow} and \emph{jump} maps, respectively, 
of the proposed hybrid estimator, where the matrices $\hat{S}(T)$ and $\hat{L}(T)$ are defined as
\begin{equation}\label{eq:hatS}
\hat{S}(T) := \left[ \begin{array}{cc} 0 & -\dfrac{2 \pi}{T} \\ \dfrac{2 \pi}{T} & 0 \end{array} \right], \hspace{0.5cm}
\hat{L}(T) := \left[ \begin{array}{c} \dfrac{4 \pi}{T} \\[12pt] 0 \end{array} \right]\,,
\end{equation}
\noindent which are such that $(\hat{S}(T)-\hat{L}(T)H)$ is Hurwitz. Note that
the lower bound $T_m$ on $T$ specified below guarantees that matrix $\hat{S}(T)$ is well-defined.

Intuitively, the rationale behind the definition of flow and jump sets for the hybrid
estimator given below is that the system is forced to jump whenever the sign of the
logic variable $q$ is different from the sign of the output $y$.
Therefore, homogeneity of the dynamics implies that 
$\tau$ is eventually upper-bounded by some value $\bar{\tau} = \pi/\omega_m$. Moreover, 
note
that the lower and upper bounds on $\omega$ induce
similar bounds on the possible values of $T$, namely $2 \pi/\omega_M = T_{m} < T < T_{M} = 2 \pi/\omega_m$.
Denoting by $\Xi$ the space where state $\xi:= (\chi,\hat{\chi},q,T,\tau)$ evolves,
\begin{equation*}
  \Xi:=\mathcal{W} \times \mathbb{R}^{2} \times \{-1,1\} \times [T_m, T_M] \times [0, \pi/\omega_m],
\end{equation*}
the closed-loop system (\ref{eq:exosystem_nominal})-(\ref{eq:hyb_estimator}) is then completed by the flow set
\begin{equation}\label{eq:flow_set}
\mathcal{C} := \{ (\chi,\hat{\chi},q,T,\tau) \in \Xi: qy \geq -\sigma \}\,,
\end{equation}
\noindent and by the jump set
\begin{equation}\label{eq:jump_set}
\mathcal{D} := \{ (\chi, \hat{\chi},q,T,\tau) \in \Xi: |y|\geq \sigma, qy \leq -\sigma \}
\end{equation}
\noindent for some $\sigma > 0$ that should be selected
smaller than $\chi_m$ to guarantee that the output trajectory, under the assumptions for the
initial conditions of (\ref{eq:exosystem_nominal}), intersects the line $qy = -\sigma$.
Note that $\mathcal{C}$ and $\mathcal{D}$ depend only on the output $y$.

\begin{figure}[t]
\centering
  \includegraphics[width=0.95\columnwidth]{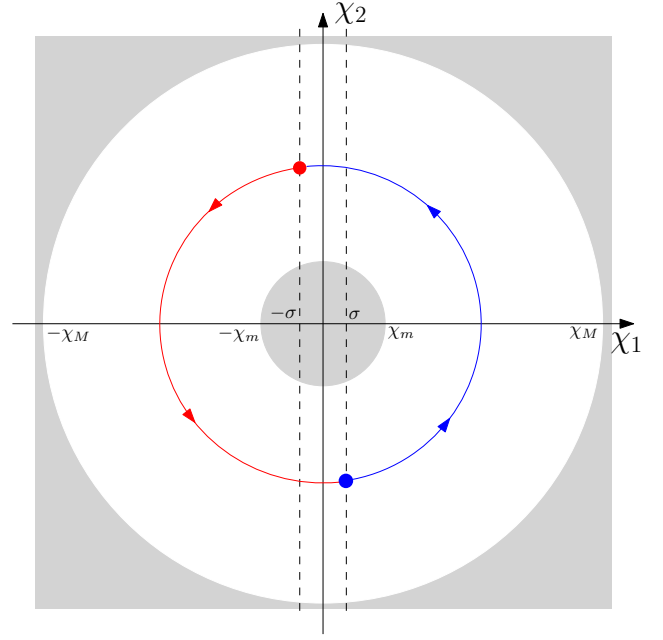}
  \caption{The white \emph{doughnut} represents the set $\mathcal{W}$. The
  red/blue curve is a solution $\chi(t,j)$ where the dots represents jump instants. The solution is blue
  in regions where $\mathfrak{h}(\chi(t,j)) = -1$ and is red in regions where $\mathfrak{h}(\chi(t,j)) = 1$}\label{fig:Trajectories}
\end{figure}

Adopting the notation introduced in the previous sections, 
define the functions $\mathfrak{h} : \mathbb{R}^{2} \rightarrow \{-1,1\}$
as
\begin{equation}\label{eq:h_frak}
\mathfrak{h}(\chi) := \left\{  
\begin{array}{rcr}
-1, & {\rm if} & \chi_1 \geq \sigma \hspace{0.2cm} \lor \hspace{0.2cm} (|\chi_1|<\sigma \wedge \chi_2 > 0)  \\
1, & {\rm if} & \chi_1 \leq -\sigma \hspace{0.2cm} \lor \hspace{0.2cm} (|\chi_1|<\sigma \wedge \chi_2 < 0)
\end{array}
\right.
\end{equation}
\noindent and $\varrho : \mathbb{R}^2 \times \mathbb{R} \rightarrow \mathbb{R}$
as $\varrho(\chi,\tau) := H e^{S\left(\pi/\omega -\tau \right)}\chi - \mathfrak{h}(\chi)\sigma$, which is constant along
flowing solutions because
\begin{equation}\label{eq:rho_const}
\dot{\varrho}(\chi,\tau) = -H e^{S\left(\pi/\omega -\tau \right)} S\chi + H e^{S\left(\pi/\omega-\tau \right)} \dot{\chi} = 0\,,
\end{equation}
\noindent which is zero if and only if $\tau$ is suitably synchronized with $\chi$, namely such that 
$\tau^+ = \pi/\omega$: this would in turn guarantee that $T^{+} = 2 \pi/ \omega$ at the next jump provided that also $T = 2 \pi/ \omega$.
Then,
consider the sets
\begin{equation}\label{eq:Gamma_3_IM}
\begin{split}
\Gamma_3 := \Big \{ \xi & \in \Xi: \varrho(\chi,\tau)=0 \Big \}\,,
\end{split}
\end{equation}
\begin{equation}\label{eq:Gamma_2_IM}
\begin{split}
\Gamma_2 := \Big \{ \xi & \in \Gamma_3: T = \dfrac{2 \pi}{\omega}\Big \}
\end{split}
\end{equation}
\noindent and
\begin{equation}\label{eq:Gamma_1_IM}
\begin{split}
\Gamma_1 := \Big \{ \xi & \in \Gamma_2: \chi = \hat{\chi} \Big \}
\end{split}
\end{equation}
\noindent with $\xi := (\chi,\hat{\chi},q,T,\tau)$, which clearly satisfy $\Gamma_1 \subset \Gamma_2 \subset \Gamma_3$.
Roughly speaking, on the set $\Gamma_1$ the state $\hat{\chi}$ of the
hybrid estimator (\ref{eq:hyb_estimator}) is perfectly synchronized
with that of system (\ref{eq:exosystem_nominal}), $\Gamma_2$ consists of the set of states that ensure $T^{+} = 2\pi/\omega$
at the next jump, while $\Gamma_3$ prescribes the correct value of the
initial timer $\tau$, depending on the initial phase of $\chi$, such that
at jumps $\tau$ coincides with $\pi/\omega$. Note that $\Gamma_1$ is compact, by the hypothesis
on $\mathcal{W}$, while $\Gamma_2$ and $\Gamma_3$ are closed. 

Let us now show GAS of $\Gamma_1$ by using reductions theorems. To this end,
we apply the recursive version of Theorem~\ref{thm:reduction_asy_stability} given in Theorem~\ref{thm:recursive_reduction}. In particular, we show
GAS of $\Gamma_1$ relative to $\Gamma_2$, GAS of $\Gamma_2$ relative to $\Gamma_3$,
GAS of $\Gamma_3$ and finally boundedness of solutions.
To begin with, it can be shown
that $\Gamma_1$ is globally asymptotically stable relative to $\Gamma_2$. 
In fact, letting $\eta_1 = \chi - \hat{\chi}$ denote
the estimation error, then its dynamics restricted to $\Gamma_2$, due to the trivial jumps
of $\chi$ and $\hat{\chi}$, is described by the hybrid system defined by the flow dynamics
\begin{equation}\label{eq:e_dot}
\dot \eta_1 = S\chi - \hat{S}(T)\hat{\chi} - \hat{L}(T)H\eta_1 = (S-\hat{L}(T)H)\eta_1 \,,
\end{equation}
\noindent which is obtained by considering that, on the set
$\Gamma_2$, $\hat{S}(T) = S$, for $\xi \in \mathcal{C}$, and the jump
dynamics $\eta_1^+ = \eta_1$ for $\xi \in \mathcal{D}$.  The claim
follows by recalling that $\hat{L}(T)$ is such that $(S-\hat{L}(T)H)$
is Hurwitz and by \emph{persistent flowing} conditions of stability
\cite[Proposition~3.27]{GoeSanTee12}.

\begin{figure}[t]
\centering
  \includegraphics[width=1.1\columnwidth]{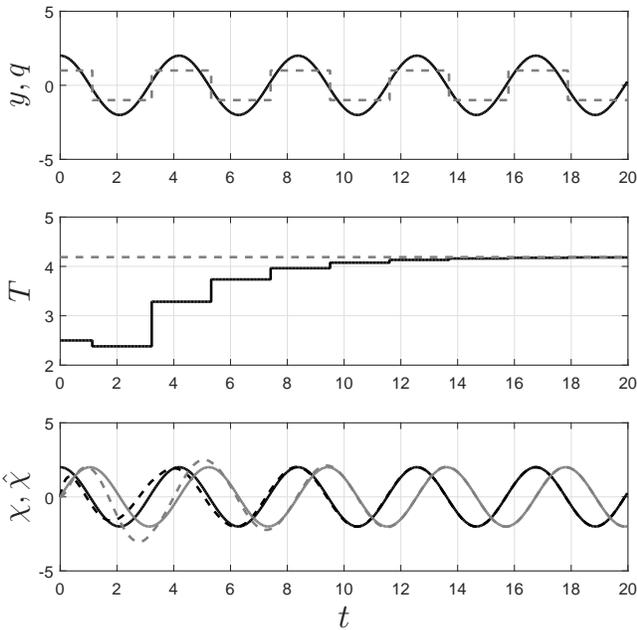}
  \caption{Top Graph: time histories of the function $y$ generated by 
    (\ref{eq:exosystem_nominal}) and of the state $q(t,k)$, solid and dashed lines,
      respectively. Middle Graph: time histories of the estimate 
  $T(t,k)$, converging to the correct value of the period of oscillation $2 \pi/ \omega$. Bottom Graph: time histories of $\hat{\chi}_{1}(t,k)$ (dark) and $\hat{\chi}_{2}(t,k)$ (gray), solid lines,
  converging to the actual states $\chi_{1}(t,k)$ and $\chi_{2}(t,k)$, dashed lines.  }\label{fig:Estimates}
\end{figure}

Moreover, $\Gamma_2$ is globally asymptotically stable relative to $\Gamma_3$.
To show this, let $\eta_2 = T - 2\pi/\omega$ and recall that
all the trajectories of (\ref{eq:hyb_estimator}) that remain in $\Gamma_3$
are characterized by the property that $\tau = \pi/\omega$ at the time of jump.
Therefore, the dynamics of $\eta_2$ restricted to $\Gamma_3$ 
is described by the hybrid system defined by the flow dynamics $\dot{\eta}_2 = 0$,
for $\xi \in \mathcal{C}$ and the jump dynamics
\begin{equation}\label{eq:eta2_plus}
\eta_2^{+} = T^{+} - \dfrac{2\pi}{\omega} = \lambda\left(T-\dfrac{2\pi}{\omega} \right) = \lambda \eta_2 \,,
\end{equation}
\noindent for $\xi \in \mathcal{D}$. Asymptotic stability of
$\Gamma_2$ relative to $\Gamma_3$ then follows by \emph{persistent
  jumping} stability conditions \cite[Proposition~3.24]{GoeSanTee12},
which applies because $\sigma > \chi_m$, and by recalling that $0 \leq
\lambda < 1$. In addition, global attractivity of $\Gamma_3$ can be
shown by relying on the fact that $\tau(t_2,1)$, namely the value of
$\tau$ before the second jump, is equal to $\pi/\omega$, hence
implying that $\varrho(\chi(t,k),\tau(t,k)) = 0$ for $(t,k) \in {\rm
  dom} \, \varrho$ with $k>1$. Stability of $\Gamma_3$, on the other
hand, follows by noting that a perturbation $\delta$ on $\tau(0,0)$
with respect to the values in $\Gamma_3$, $i.e.$ values that satisfy
$\varrho(\chi,\tau) = 0$, results in $\tau(t_1,0) = \pi/\omega +
\varepsilon(\delta)$, with $\varepsilon$ a class-$\mathcal{K}$
function of $\delta$.

Finally, boundedness of the trajectories of the state $\chi$ and of $q$, $T$ and $\tau$
follows by the existence of the strongly forward invariant set $\mathcal{W}$ - described by 
the lower, $\chi_m$, and upper, $\chi_M$, bounds - and by definition of the flow and jump
sets, respectively. Therefore, to conclude global asymptotic stability of the set $\Gamma_1$
it only remains to show that the trajectories of $\hat{\chi}$ are bounded. Towards this end,
recall the flow dynamics of $\hat{\chi}$ in (\ref{eq:hyb_estimator}), namely
\begin{equation}\label{eq:dyn_hatchi}
\dot{\hat{\chi}} = (S(T)-\hat{L}(T)C_o)\hat{\chi} + \hat{L}(T)C_o\chi := M(T)\hat{\chi} + \hat{L}(T)C_o\chi\,,
\end{equation}
\noindent with $M(T)$, and its derivative with respect to $T$, uniformly bounded in $T$, since
$T \in [T_m, T_M]$, and Hurwitz uniformly in $T$ by definition of $\hat{L}(T)$, whereas
the jump dynamics is described by $\hat{\chi}^{+} = \hat{\chi}$. Thus, by applying 
\cite[Lemma~5.12]{khalil_book}, it follows that there exists a unique positive definite solution
$P(T)$ to the Lyapunov equation $P(T)M(T)+M(T)^{\top}P(T) = -I$, with the additional property
that $c_1 |\hat{\chi}|^2 \leq \hat{\chi}^{\top}P(T)\hat{\chi} \leq c_2 |\hat{\chi}|^2$, for some
positive constants $c_1$ and $c_2$.
Boundedness of the trajectories of $\hat{\chi}$ then follows by standard manipulations
on the time derivative of the functions $\hat{\chi}^{\top}P(T)\hat{\chi}$ along
the trajectories of (\ref{eq:dyn_hatchi}) and by noting that $\hat{L}(T)$ is uniformly
bounded, by the definition of $\hat{L}$ and of $T$, and by recalling
that $|\chi|$ is uniformly bounded by definition of the strongly forward invariant compact set $\mathcal{W}$.

In the following numerical simulations, we suppose that $\omega = 1.5$ and
we let $\sigma = 0.25$ and $\lambda = 0.5$. Moreover, 
we let $\chi(0,0) = [2,\,0]'$ and $\hat{\chi}(0,0) = [0,\,0]'$, while the remaining components of the
estimator are initialized as $q(0,0) = 1$, $T(0,0) = 2.5$ and $\tau(0,0) = 0$.
The top graph of Figure~\ref{fig:Estimates} depicts the time histories of the function 
$y$ generated by (\ref{eq:exosystem_nominal}) and of the state $q(t,k)$, solid and dashed lines,
respectively. The middle graph of Figure~\ref{fig:Estimates} shows the time histories of the estimate 
$T(t,k)$, converging to the correct value of the period of oscillation $2 \pi/ \omega$, while the bottom
graph displays the time histories of $\hat{\chi}_{1}(t,k)$ (dark) and $\hat{\chi}_{2}(t,k)$ (gray), solid lines,
converging to the actual states $\chi_{1}(t,k)$ and $\chi_{2}(t,k)$, dashed lines.

\section{Conclusion} \label{sec:conclusion}
In this paper we presented three reduction theorems for stability,
local/global attractivity, and local/global asymptotic stability of
compact sets for hybrid dynamical systems, along with a number of
their consequences. The proofs of these results rely crucially on the
${\cal KL}$ characterization of robustness of asymptotic stability of
compact sets found in~\cite[Theorem~7.12]{GoeSanTee12}. A different proof
technique is possible which generalizes the proofs found
in~\cite{ElHMag13}. As a future research direction, we conjecture
that, similarly to what was done in~\cite{ElHMag13} for continuous
dynamical systems, it may be possible to state reduction theorems for
hybrid systems in which the set $\Gamma_1$ is only assumed to be
closed, not necessarily bounded.

In addition to the applications listed in the introduction, the
  reduction theorems presented in this paper may be employed to
  generalize the position control laws for VTOL vehicles presented
  in~\cite{RozMag14,MichielettoIFAC17}, by replacing continuous
  attitude stabilizers with hybrid ones, such as the one found
  in~\cite{MayhewTAC11}.  Furthermore, the results of this paper may
  be used to generalize the allocation techniques
  of~\cite{SassanoEJC16}, possibly following similar ideas to those
  in~\cite{SerraniAuto15}.

\section*{Acknowledgments} 
 The authors wish to thank Andy Teel for fruitful discussions and
 Antonio Lor\'ia for making their research collaboration possible.

\bibliographystyle{IEEEtranS}
\bibliography{IEEEabrv,biblio}

% biography section

\newif\ifbiblio
\bibliofalse
%\bibliotrue

\ifbiblio
\begin{IEEEbiography}
[{\includegraphics[width=1.1in,height=1.25in,clip,keepaspectratio]{maggiore.eps}}]
{Manfredi Maggiore} was born in Genoa, Italy. He received the "Laurea"
degree in Electronic Engineering in 1996 from the University of Genoa
and the PhD degree in Electrical Engineering from the Ohio State
University, USA, in 2000. Since 2000 he has been with the Edward
S. Rogers Sr. Department of Electrical and Computer Engineering,
University of Toronto, Canada, where he is currently Professor. He has
been a visiting Professor at the University of Bologna (2007-2008),
and the Laboratoire des Signaux et Syst\`emes, Ecole CentraleSup\'elec
(2015-2016). His research focuses on mathematical nonlinear control,
and relies on methods from dynamical systems theory and differential
geometry.
\end{IEEEbiography}

\begin{IEEEbiography}
[{\includegraphics[width=1in,height=1.25in,clip,keepaspectratio]{MS.eps}}]
{Mario Sassano}
 was born in Rome, Italy, in 1985.
He received the B.S degree in Automation Systems
Engineering and the M.S degree in Systems and
Control Engineering from the University of Rome
”La Sapienza”, Italy, in 2006 and 2008, respectively.
In 2012 he was awarded a Ph.D. degree by Imperial
College London, UK, where he had been a Research
Assistant in the Department of Electrical and Electronic
Engineering since 2009. Currently he is an
Assistant Professor at the University of Rome ”Tor
Vergata”, Italy. His research interests are focused
on nonlinear observer design, optimal control and differential game theory
with applications to mechatronical systems and output regulation for hybrid
systems. He is Associate Editor of the IEEE CSS Conference Editorial Board
and of the EUCA Conference Editorial Board.
\end{IEEEbiography}

\begin{IEEEbiography}
[{\includegraphics[width=1in,height=1.25in,clip,keepaspectratio]{luca_zacc.eps}}]
{Luca Zaccarian} (SM '09 -- F '16) received the Laurea and the Ph.D. degrees from the University of Roma Tor Vergata (Italy) where has been Assistant Professor in control engineering from 2000 to 2006 and then Associate Professor. Since 2011 he is Directeur de Recherche at the LAAS-CNRS, Toulouse (France) and since 2013 he holds a part-time associate professor position at the University of Trento, Italy. Luca Zaccarian's main research interests include analysis and design of nonlinear and hybrid control systems, modeling and control of mechatronic systems. He is currently a member of the EUCA-CEB, an associate editor for the IFAC journal Automatca. He was a recipient of the 2001 O. Hugo Schuck Best Paper Award given by the American Automatic Control Council.
\end{IEEEbiography}
\fi

\end{document}